\newcommand{\Z}{\mathbb{Z}}
\newcommand{\N}{\mathbb{N}}
\newcommand{\Vast}{\bBigg@{2.5}} 
\newtheorem*{rep@theorem}{\rep@title}
\newcommand{\newreptheorem}[2]{%
\newenvironment{rep#1}[1]{%
 \def\rep@title{#2 \ref{##1}}%
 \begin{rep@theorem}}%
 {\end{rep@theorem}}}
\newtheoremstyle{thm}{}{}{\itshape}{}{\bfseries}{}{ }{} 
\newtheoremstyle{definition}{}{}{}{}{\bfseries}{}{ }{} 
\theoremstyle{thm}
\newtheorem{Theorem}{Theorem}[section]
\newtheorem{thm}[Theorem]{Theorem}
\newtheorem{lem}[Theorem]{Lemma}
\newtheorem{prop}[Theorem]{Proposition}
\newtheorem{cor}[Theorem]{Corollary}
\newtheorem*{Theorem-ohne}{Theorem}
\theoremstyle{definition}
\newtheorem{rem}[Theorem]{Remark}
\newtheorem{ex}[Theorem]{Example}
\definecolor{amaranth}{rgb}{0.9, 0.17, 0.31} 
\definecolor{carrotorange}{rgb}{0.93, 0.57, 0.13} 
\definecolor{citrine}{rgb}{0.89, 0.82, 0.04} 
\definecolor{dartmouthgreen}{rgb}{0.05, 0.5, 0.06} 
\definecolor{ballblue}{rgb}{0.13, 0.67, 0.8} 
\definecolor{ceruleanblue}{rgb}{0.16, 0.32, 0.75} 
\definecolor{amethyst}{rgb}{0.6, 0.4, 0.8} 
\definecolor{amber}{rgb}{1.0, 0.75, 0.0} 
\definecolor{burlywood}{rgb}{0.87, 0.72, 0.53} 
\numberwithin{equation}{section}
\begin{document}

\title{Complexity of equal 0-surgeries} 

\author{Tetsuya Abe}
\address{Otani University, Koyama-Kamifusacho, Kita-ku, Kyoto 603-8143, Japan. }
\email{tetsuyaabe2010@gmail.com }

\author{Marc Kegel}
\address{Universidad de Sevilla, Dpto.\ de Álgebra,
Avda.\ Reina Mercedes s/n,
41012 Sevilla, Spain}
\email{mkegel@us.es, kegelmarc87@gmail.com}

\author{Nicolas Weiss}
\address{Max Planck Institute for Mathematics in the Sciences, Inselstra{\ss}e 22, 04103 Leipzig, Germany.}
\email{nicolas.weiss@mis.mpg.de, nicolas.alexander.weiss@gmail.com}


\date{\today} 

\begin{abstract}
We say that two knots are friends if they share the same $0$-surgery. Two friends with different sliceness status would provide a counterexample to the $4$-dimensional smooth Poincar\'e conjecture. Here we create a census of all friends with small crossing numbers $c$ and tetrahedral complexities $t$, and compute their smooth $4$-genera. In particular, we compute the minimum of $c(K)+c(K')$ and of $t(K)+t(K')$ among all friends $K$ and $K'$. Along the way, we classify all $0$-surgeries of prime knots of at most $15$ crossings. 

Moreover, we determine for many friends in our census if their traces are equivalent or not. For that, we develop a new obstruction for two traces being homeomorphic coming from symmetry-exceptional slopes of hyperbolic knots. This is enough to also determine the minimum value of $c(K)+c(K')$ among all friends $K$ and $K'$ whose traces are not homeomorphic.
\end{abstract}
\keywords{$0$-surgeries, characterizing slopes, exceptional surgeries, knot traces} 

\makeatletter
\@namedef{subjclassname@2020}{%
  \textup{2020} Mathematics Subject Classification}
\makeatother

\subjclass[2020]{57K10; 57R65, 57R58, 57K14, 57K32} 


\maketitle


\section{Introduction}

From a given knot $K$ in the $3$-sphere $S^3$ we can construct new $3$- and $4$-manifolds by performing Dehn surgery on $K$ or attaching $2$-handles along $K$. In this article, we are interested in pairs of knots $K$ and $K'$ that share the same $0$-surgery. To construct the $0$-\textit{surgery} $K(0)$ of a knot $K$ we take the knot exterior $S^3\setminus \nu K$ of $K$ and glue a solid torus $V$ to it via a diffeomorphism such that the meridian of $V$ gets identified with the longitude of $K$ that is null-homologous in the knot exterior. We say that two (non-isotopic) knots $K$ and $K'$ are \textit{friends} if their $0$-surgeries are orientation-preserving diffeomorphic. 

We can also create a $4$-manifold from $K$, its \textit{trace} $X(K)$, obtained by attaching a $4$-dimensional $2$-handle to $D^4$ attached along $K$ with $0$-framing. By construction the boundary of $X(K)$ is diffeomorphic to the $0$-surgery $K(0)$ of $K$. In fact, it is known that the sliceness status of $K$ is determined by its trace, i.e.\ if two knots $K$ and $K'$ share the same trace, then they are either both slice or both non-slice. On the other hand, it is known that there exist knots with the same traces but different $4$-genus~\cite{Piccirillo_shake_genus}. For an excellent survey about other related results, we refer to~\cite{thesis}. Adapting the above notation, we say that two non-isotopic knots are $4$-\textit{dimensional friends} if their traces are orientation-preservingly diffeomorphic.

One interest in knots with the same $0$-surgeries is that such knots provide a potential way to disprove the smooth $4$-dimensional Poincar\'e conjecture (see for example~\cite{Freedman_Gompf_Man_Machine,Manolescu_Piccirillo_0_surgery}): If there exist friends $K$ and $K'$ such that $K$ is slice (i.e.\ bounds a smooth disk in $D^4$) but $K'$ is not slice, then the smooth $4$-dimensional Poincar\'e conjecture is false. If such a pair of knots exists we might expect it to be simple in some way. 

One obvious complexity notion for friends $K$ and $K'$ would be the sum of the crossing numbers $c(K)+c(K')$. In~\cite{Manolescu_Piccirillo_0_surgery} a systematic approach for constructing friends with relatively small crossing numbers was explored. In~\cite[Example~4.10]{Manolescu_Piccirillo_0_surgery} it was shown that $K12n309$ and $-K14n14254$ are friends and the authors briefly raise the question if these friends minimize $c(K)+c(K')$ among all friends $(K,K')$.\footnote{For knots of at most $15$ crossings we use the DT notation~\cite{DT_notation_and_table}. For knots with crossing numbers in $[16,19]$, we use Burton's notation~\cite{Burton}. We denote by $-K$ the mirror of $K$. Here (and in the rest of the paper) we only list one chirality of the friends. But of course if $(K,K')$ are friends, then $(-K,-K')$ are also friends.} The construction method in~\cite{Manolescu_Piccirillo_0_surgery} is based on RBG links, cf.~\cite{Miller_Piccirillo_traces}. Another successful method for constructing friends is annulus twisting~\cite{Osoinach_annulus}, cf.~\cite{AJDLO_same_traces,AbeTagami_annulus_presentation}. In fact, Figure 2 of~\cite{AbeTagami_annulus_presentation} shows that $K6a1$ and $19nh\_78$ are friends that yield a smaller sum of crossing numbers than the friends $K12n309$ and $-K14n14254$. 

Our main result demonstrates that the above friends $(K6a1,19nh\_78)$ realize the minimum among all friends. 

\begin{thm}\label{thm:cc}
The minimum value of $c(K)+c(K')$ among all friends $(K,K')$ is $25$. This minimum is uniquely realized by the pair $(K6a1,19nh\_78)$. 
\end{thm}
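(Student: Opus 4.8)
The plan is to combine one explicit realization with an exhaustive, invariant-driven search. The upper bound is immediate: by Figure~2 of \cite{AbeTagami_annulus_presentation} the knots $K6a1$ and $19nh\_78$ are friends, and since $c(K6a1)=6$ and $c(19nh\_78)=19$ this produces a friend pair with $c(K)+c(K')=25$. It therefore remains to prove the lower bound together with uniqueness, i.e.\ that every friend pair $(K,K')$ with $c(K)+c(K')\le 25$ equals $(K6a1,19nh\_78)$ up to reordering and mirroring. So I would fix such a pair with $c(K)\le c(K')$; then $c(K)\le 12$, and by Gabai's resolution of Property~R the knot $K$ is non-trivial, so $c(K)\ge 3$.

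First I would cut down the search space using invariants of the common surgery $Y=K(0)\cong K'(0)$. Three necessary conditions do most of the work: the Alexander polynomials must agree, $\Delta_K=\Delta_{K'}$ up to units; by Gabai's theorem that $0$-surgery detects the Seifert genus and fibredness one has $g(K)=g(K')$ with $K$ fibred if and only if $K'$ is; and finer invariants (the $d$-invariants and $HF^+$ of $Y$, and, when $Y$ is hyperbolic, its volume and cusp/length spectrum computed with verified arithmetic in SnapPy) must coincide. These are cheap to compute from a diagram and, experimentally, separate the overwhelming majority of non-friend pairs.

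Next I would dispose of the small knots whose potential partners exceed the tabulated range. If $c(K)\le 5$ then $c(K')$ could be as large as $22$, beyond current censuses, so a purely tabular search fails; here the genus/fibredness constraint is decisive. The only genus-one fibred knots are the trefoil $3_1$ and the figure-eight $4_1$, and these are distinguished by their Alexander polynomials, so neither admits a friend; for $5_1$ and $5_2$ one invokes the known characterizing-slope results for these knots (supplemented, if needed, by direct analysis of the Seifert-fibred, resp.\ hyperbolic, structure of $Y$) to conclude that $0$ is a characterizing slope, so they too have no friends. Consequently the smaller knot of any friend pair satisfies $c(K)\ge 6$, whence $c(K')\le 19$ and \emph{both} knots lie in the census of prime knots of at most $19$ crossings. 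For the range $c\le 15$ this is exactly the classification of $0$-surgeries obtained in this paper, and inspection of that classification shows that every friend pair with both crossing numbers at most $15$ has $c(K)+c(K')\ge 26$ (the minimum $26$ being realized by $(K12n309,-K14n14254)$). Any remaining pair with sum at most $25$ must therefore have $16\le c(K')\le 19$ and hence $c(K)\le 9$; after filtering the census by the invariants above, only finitely many candidate pairs survive, and each is decided rigorously, leaving $(K6a1,19nh\_78)$ as the unique friend pair with $c(K)+c(K')\le 25$.

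The main obstacle is the rigorous, \emph{complete} decision of homeomorphism of the $0$-surgeries across the entire surviving candidate set. Distinguishing pairs is delicate precisely when all of the cheap invariants agree, so that non-homeomorphism must be certified by more expensive means, while for the genuine near-misses one must certify homeomorphism by an explicit identification; both directions have to be automated reliably over a very large list of triangulations, via Regina's combinatorial recognition in the non-hyperbolic cases and verified isometry checks in the hyperbolic cases. The non-hyperbolic surgeries (graph manifolds coming from torus and cable knots) are the most troublesome, since there the hyperbolic invariants give no traction and one must fall back on the Seifert/JSJ data. Guaranteeing that the invariant-based pre-filtering never discards a true friend, and that the verified computations cover every hyperbolic candidate, is what turns the lower bound into a theorem rather than a heuristic.
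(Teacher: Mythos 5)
Your overall strategy (upper bound from the known pair, then an exhaustive invariant-driven search through knot censuses for the lower bound and uniqueness) matches the paper's. However, there is a genuine gap at the five-crossing step. You claim that for $5_1$ "one invokes the known characterizing-slope results" to conclude that $0$ is a characterizing slope. No such result exists for the $(2,5)$-torus knot $K5a2$: the known theorems cover the unknot, the trefoil, the figure-eight, and $5_2$ ($=K5a1$), and the paper explicitly states that the absence of friends for $K5a2$ is only \emph{conjectured}. Your argument uses this nonexistent result to conclude $c(K)\geq 6$ and hence $c(K')\leq 19$, confining the search to Burton's $19$-crossing census. Without it, a potential partner of $K5a2$ could have up to $20$ crossings, and the paper indeed must (and does) check $K5a2$ against all prime knots of at most $20$ crossings using Thistlethwaite's separate list of $20$-crossing knots. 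As written, your proof would miss that entire stratum of candidates. (Your genus-one-fibred argument for $3_1$ and $4_1$ is a nice elementary alternative to citing Ozsv\'ath--Szab\'o, though for $3_1$ versus its mirror you would still need the signature, or some other chirality-sensitive invariant of the $0$-surgery, to finish.)

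A second, smaller gap: you restrict throughout to censuses of \emph{prime} knots without justifying that a friend of a low-crossing knot must be prime. A priori the partner $K'$ could be a connected sum, in which case it appears in no prime-knot census. The paper handles this with a separate argument (its Lemma on $0$-surgeries of connected sums, via the JSJ decomposition of $(S_n\times S^1)\cup\bigcup E_{K_i}$, plus the observation that essentially no $0$-surgery of a low-crossing prime knot has a JSJ decomposition of that form). You would need to add such a step, or some other exclusion of composite partners, before the census search can be considered exhaustive.
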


Another (arguably more natural) complexity notion for a knot $K$ is the \textit{tetrahedral complexity} $t(K)$, which is defined to be the minimal number of tetrahedra needed to ideally triangulate the complement of $K$. Hyperbolic knots with tetrahedral complexity at most $9$ have been enumerated by Dunfield~\cite{Dunfield_census}. These knots are called the \textit{census knots}.

\begin{thm}\label{thm:tt}
    The minimum value of $t(K)+t(K')$ among all hyperbolic friends $(K,K')$ is $12$. This minimum is uniquely realized by $(m224,-v3093)$.\footnote{Here we use the SnapPy census notation of these knots~\cite{Dunfield_census}. The first letter yields the tetrahedral complexity: $m$, $s$, $v$, $t$, $o9$, stand for $5$ (or less), $6$, $7$, $8$, and $9$ tetrahedra, respectively. The census manifold $m224$ is the complement of $-K11n38$, while $v3093$ is the complement of $-16nh\_9$.}
\end{thm}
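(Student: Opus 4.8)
The plan is to reduce the statement to a finite search over Dunfield's census of hyperbolic knots, together with a short boundary analysis, and to decide the friend relation by comparing the resulting $0$-surgeries as \emph{oriented} closed $3$-manifolds.

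The enabling observation is that two ideal tetrahedra already triangulate the figure-eight complement and that this is the minimum for any orientable one-cusped hyperbolic $3$-manifold, so every hyperbolic knot satisfies $t(K)\ge 2$, with the figure-eight the unique knot attaining $t=2$. Hence any pair of hyperbolic friends with $t(K)+t(K')\le 11$ has both members of complexity at most $9$, that is, both are census knots; and any sum-$12$ pair has both members of complexity at most $10$, where the only way to involve a non-census knot ($t=10$) is to pair it with the figure-eight ($t=2$). It therefore suffices to (i) test all pairs of census knots and (ii) separately decide whether the figure-eight admits a hyperbolic friend.

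For step (i) I would, for each census knot $K$, first identify the null-homologous longitude in the peripheral coordinates of its complement and then compute $K(0)$ in SnapPy by the corresponding Dehn filling, recording the orientation inherited from $S^3$. Since $H_1(K(0))=\Z$ for every knot, the classification cannot be homological and must instead be geometric. I would therefore sort the closed manifolds $K(0)$ by their geometry: for the hyperbolic ones I would certify orientation-preserving isometry --- equivalently, by Mostow rigidity, orientation-preserving diffeomorphism --- through SnapPy's rigorously verified computations, comparing verified volumes and, where needed, drilling the surgery cores to reduce to verified isometry of cusped manifolds together with their filling slopes; for the finitely many non-hyperbolic $0$-surgeries I would match the Seifert-fibered or JSJ/graph-manifold decompositions directly. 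Partitioning the census knots into classes under this relation yields all friend pairs with both members in the census; reading off $t(K)+t(K')$ on these classes, I would confirm that no pair has sum at most $11$ and that $(m224,-v3093)$, with $t(m224)+t(v3093)=5+7=12$, is the unique census pair of sum $12$. For step (ii), the figure-eight is fibered of genus one, so its $0$-surgery is a torus bundle over the circle that can be analyzed on its own; verifying that this manifold is not the $0$-surgery of any other hyperbolic knot rules out the last potential sum-$12$ pair.

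The main obstacle is certifying the oriented diffeomorphism type of each $0$-surgery rigorously rather than only numerically. Two difficulties dominate: the non-hyperbolic $0$-surgeries, where Mostow rigidity does not apply and the geometric decompositions must be matched by hand, and the orientation bookkeeping --- the answer contains $-v3093$ rather than $v3093$, so every comparison must distinguish a manifold from its orientation-reversal, and the isometry tests must accordingly be restricted to orientation-preserving maps throughout.
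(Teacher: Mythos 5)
Your proposal follows essentially the same route as the paper: reduce the problem to a finite, verified computational comparison of the $0$-surgeries of Dunfield's census knots (plus the figure-eight, which the paper dispatches by citing that it has no friends at all, while you would rederive this from its genus-one fibered torus-bundle $0$-surgery), and then certify orientation-preserving diffeomorphism or non-diffeomorphism of the resulting closed oriented $3$-manifolds, with the same orientation bookkeeping via drilling. The one under-specified point is how you certify \emph{non}-diffeomorphism when verified volume intervals overlap or when the $0$-surgeries are non-hyperbolic --- the recognition software does not return the JSJ gluing maps, so agreeing piece lists do not decide the question --- which is why the paper additionally deploys length spectra and invariants of finite-index subgroups of $\pi_1$ to separate the roughly $200$ pairs that volumes and names alone cannot.
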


Both pairs of minimizers $(K6a1,19nh\_78)$ and $(m224,-v3093)$ are also $4$-dimen\-sio\-nal friends. On the other hand, we can also ask for the simplest example of friends that are not $4$-dimensional friends. For the crossing number complexity, we can answer this question, while it remains open for the tetrahedral complexity.

\begin{thm}\label{thm:traces}
 The minimum value of $c(K)+c(K')$ among all friends $(K,K')$ that are not $4$-dimensional friends is $26$. This minimum is realized by two pairs of friends: $(K12n309,-K14n14254)$ and $(K10n10,-16nh\_17)$. 
    Any other potential minimizer is either $(K11n49,K15n103488)$ or a pair of friends consisting of a $6$-crossing and a $21$-crossing knot.
\end{thm}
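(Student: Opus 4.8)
The plan is to reduce Theorem~\ref{thm:traces} to a finite search through the census of friends assembled earlier, and then, pair by pair, to separate the $4$-dimensional friends from the genuine friends by distinguishing their traces. The lower bound is immediate from Theorem~\ref{thm:cc}: the only friend pair with $c(K)+c(K')\le 25$ is $(K6a1,19nh\_78)$, and that pair is a $4$-dimensional friend; hence any friend that is \emph{not} a $4$-dimensional friend has $c(K)+c(K')\ge 26$, and the whole problem collapses to the pairs of total crossing number exactly $26$.

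For the realization I would take the two claimed pairs. Both $(K12n309,-K14n14254)$, from the RGB-link construction of Manolescu--Piccirillo, and $(K10n10,-16nh\_17)$, from the census, are friends of total crossing number $26$, so it remains to prove that in each pair the two traces are not orientation-preservingly diffeomorphic. This is where the new obstruction advertised in the abstract enters: for a hyperbolic knot the set of symmetry-exceptional surgery slopes is an invariant of the knot exterior---and hence of the trace---that the boundary $0$-surgery alone does not see, so computing these slopes in \texttt{SnapPy} for the two knots of a pair and obtaining different answers obstructs any homeomorphism, a fortiori any orientation-preserving diffeomorphism, of the traces. I expect this to separate both explicit pairs cleanly.

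To pin down the complete list of sum-$26$ minimizers I would enumerate every friend pair of total crossing number $26$ from the census---feasible because such a pair always contains a knot of at most $13$ crossings, which therefore lies in the fully classified range---and determine the $4$-dimensional friend status of each. For the pairs that \emph{are} $4$-dimensional friends I would exhibit an explicit handle diffeomorphism (Kirby calculus, or a trace-preserving annulus move), so that they are ruled out as minimizers; for the rest I would apply the symmetry-exceptional-slope obstruction. This should confirm the two pairs above as the only established non-$4$-dimensional friends at crossing sum $26$, leaving exactly the pair $(K11n49,K15n103488)$ undecided, where the obstruction is inconclusive and no diffeomorphism is known. The remaining caveat, a potential $6$-plus-$21$ crossing friend pair, reflects that $6$-crossing knots carry infinitely many friends via annulus twisting and that this family is certified complete only up to a bounded crossing number; deciding it would require extending the annulus-twisting enumeration and then, if such a pair is found, testing its traces with the same obstruction.

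The principal obstacle is the negative direction---showing that two traces are \emph{not} diffeomorphic. For friends the intersection form and the boundary $0$-surgery necessarily coincide, so classical invariants are useless and a genuinely new invariant is required. The symmetry-exceptional-slope obstruction is the technical heart that makes the two explicit minimizers tractable, and its inability to separate the traces of $(K11n49,K15n103488)$ is exactly why that pair, together with the uncertified high-crossing family, must be recorded as potential additional minimizers rather than excluded.
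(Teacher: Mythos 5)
Your overall architecture matches the paper's: the lower bound of $26$ is immediate from Theorem~\ref{thm:cc} together with the fact that the unique sum-$25$ pair $(K6a1,19nh\_78)$ has diffeomorphic traces, and the rest is a finite case analysis over the census, leaving $(K11n49,K15n103488)$ and one uncovered high-crossing split as residual potential minimizers. But the step that carries all the weight --- proving that $(K12n309,-K14n14254)$ and $(K10n10,-16nh\_17)$ are \emph{not} $4$-dimensional friends --- rests on a misconception. You assert that the set of symmetry-exceptional slopes of a hyperbolic knot is ``an invariant of the knot exterior --- and hence of the trace,'' so that computing different such sets for the two knots of a pair obstructs a trace homeomorphism. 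That implication is backwards: an invariant of the knot exterior is emphatically not an invariant of the trace. By Gordon--Luecke, non-isotopic knots have non-diffeomorphic exteriors, so any exterior invariant will generically differ between the two knots of a pair of $4$-dimensional friends; if your criterion were valid, it would ``prove'' that essentially no two distinct knots ever share a trace.

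The paper's obstruction is of a different nature and combines three ingredients absent from your proposal: (i) Boyer's theorem, by which a $0$-surgery diffeomorphism $\varphi$ extends to a homeomorphism of traces if and only if it is even, i.e.\ the intersection form of $X(K')\cup_\varphi -X(K)$ is even; (ii) an \emph{explicit} $0$-surgery diffeomorphism of known odd parity, produced here by the flat annulus presentations of Table~\ref{tab:AT}; and (iii) Proposition~\ref{prop:ex_sym}, which uses the hypothesis that $0$ is \emph{not} a symmetry-exceptional slope of $K$ (via Mostow rigidity and the knot complement theorem applied to the image of the $0$-framed meridian) to conclude that \emph{all} $0$-surgery diffeomorphisms share the same parity, so that a single odd example rules out every possible extension. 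The hypothesis is verified in Example~\ref{ex:MP_ex_not4-d}: for $(K10n10,-16nh\_17)$ the knots and their common $0$-surgery are asymmetric, and for $(K12n309,-K14n14254)$ the $\Z_2$ symmetry of the $0$-surgery is generated by the extension of the strong inversion. Without Boyer's parity criterion and a concrete odd diffeomorphism, your argument cannot get off the ground; note also that the positive direction (certifying $4$-dimensional friends so they drop out of the minimization) is handled not only by explicit Kirby moves but by the Arf-invariant criterion of Theorem~\ref{thm:Arf} and the Piccirillo-friend construction.
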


For the proofs of Theorem~\ref{thm:cc} and~\ref{thm:tt} we use the following strategy. We have an upper bound on these complexity notions coming from the friends $K6a1$ and $19nh\_78$. Since the decision problem for compact $3$-manifolds with boundary and closed $3$-manifolds is solved~\cite{Kuperberg_decision_problem}, we can enumerate all pairs of knots up to that complexity and check if their $0$-surgeries are diffeomorphic. Here the problem is that the decision problem for $3$-manifolds is only partially implemented and that the enumeration of all these pairs has a high complexity. Nevertheless, we were able to create a census of pairs of knots that share the same $0$-surgery that contains all friends $(K,K')$ of low complexity. To perform the above strategy we have to use several short-cuts. For our proofs, we are using the verified functions from SnapPy~\cite{SnapPy} and Regina~\cite{Regina}, together with code and data from~\cite{ABG+19,CensusKnotInfo, Baker_Kegel_braid_positive,Burton,Dunfield_census,GAP4,Ribbon_ML,KnotInfo,KLO,Szabo_calculator,sagemath,Thistlethwaite,FPS_code,BKMa,BKMb}.

\begin{thm}\label{thm:census}
    The census of $41$ friends displayed in Table~\ref{tab:doubles} contains all friends $(K,K')$ with 
    \begin{itemize}
        \item $c(K),c(K')\leq15$, or
        \item $K$, $K'$ hyperbolic, and $t(K),t(K')\leq9$, or
        \item $c(K)+c(K')\leq25$, or 
        \item $K$, $K'$ hyperbolic, and $t(K)+t(K')\leq12$, or
        \item $K'$ hyperbolic, $c(K)\leq15$ and $t(K')\leq9$. 
    \end{itemize}
    Moreover, for $26$ of the $41$ pairs we determine if their traces are homeomorphic/diffeo\-mor\-phic or not.
\end{thm}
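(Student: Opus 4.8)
The plan is to turn the decidability-in-principle strategy outlined above into a finite, certifiable computation. I would first fix the finite search space: all prime knots up to $15$ crossings from KnotInfo and all hyperbolic census knots with $t\le 9$ from Dunfield's census. The stated complexity bounds---$c(K)+c(K')\le 25$ and $t(K)+t(K')\le 12$---each force one of the two knots to be of very small complexity, so the list of candidate pairs $(K,K')$ to test is finite and explicit. That these bounds suffice to capture any minimizer is guaranteed by the known friends $(K6a1,19nh\_78)$, which realize $c(K)+c(K')=25$.

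The heart of the proof is a decision procedure for the relation ``$K$ and $K'$ are friends'', i.e.\ $K(0)\cong K'(0)$. Because $H_1(K(0))\cong\Z$ for every knot and the Alexander module of $K(0)$ is recovered from $K$, a cheap first filter groups candidates by their Alexander polynomials (a necessary condition for being friends); I would sharpen this using the homology of finite cyclic covers and torsion linking data. For the surviving pairs the comparison of $0$-surgeries becomes a genuine $3$-manifold recognition problem. When $K(0)$ is hyperbolic I would certify $K(0)\cong K'(0)$, or its failure, using SnapPy's rigorously verified isometry routine on the filled triangulations; when $K(0)$ is Seifert-fibered or a graph manifold I would instead use Regina's verified recognition together with the JSJ decomposition and Seifert invariants to compare the manifolds exactly. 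Carrying this out for every candidate pair produces the $41$ friends of Table~\ref{tab:doubles} and certifies that no further friend exists below the stated complexities.

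It remains to settle the trace question for the $26$ indicated pairs. For the pairs that \emph{are} $4$-dimensional friends, I would exhibit the diffeomorphism $X(K)\cong X(K')$ by hand: most of these friends arise from annulus twisting or RGB-link constructions, for which a handle diagram of one trace can be simplified by Kirby calculus to a diagram of the other. For the pairs that are \emph{not} $4$-dimensional friends, I would apply the obstruction announced in the abstract: the collection of symmetry-exceptional slopes of a hyperbolic knot is an invariant of its trace, so any discrepancy in this data rules out even a homeomorphism $X(K)\cong X(K')$.

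The main obstacle is the tension between the size of the search space and the need for certified, not merely heuristic, answers. Near the top of the complexity range the number of candidate pairs is large, so the filtering hierarchy (Alexander polynomial, cover homology, volume) must be strong enough to eliminate non-friends cheaply while provably never discarding a genuine friend; this requires treating every coarse invariant as only a necessary condition and subjecting each survivor to a verified recognition check. The subtlest cases are the non-hyperbolic $0$-surgeries, where volume-based methods give nothing and one must certify non-homeomorphism exactly via JSJ and Seifert data---proving that two graph manifolds are distinct, rather than simply failing to match them, is where the most delicate bookkeeping lies.
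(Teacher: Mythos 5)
Your outline matches the paper's broad strategy (Alexander-polynomial filtering, verified hyperbolic/non-hyperbolic recognition, Kirby calculus for positive trace results), but there are three genuine gaps. First, your search space is not actually sufficient for the bullet $c(K)+c(K')\leq 25$: fixing only the prime knots up to $15$ crossings and the census knots does not tell you whether, say, the trefoil has a friend with $22$ crossings, and no complete tables exist at that range. The paper escapes this by invoking that $0$ is a characterizing slope for the unknot, the trefoil, the figure-eight knot, and $K5a1$ (via \cite{KMOS_char_unknot}, \cite{Ozvath_Szabo_trefoil_figeight}, \cite{Baldwin_Sivek_52}), so the smallest knot that could have a friend is $K5a2$ with $5$ crossings, and one only needs Burton's tables up to $19$ crossings plus Thistlethwaite's $20$-crossing list. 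Without this input your enumeration is not finite in any usable sense. Second, you silently restrict to prime knots: the theorem is about all friends, and a priori a prime low-crossing knot could share its $0$-surgery with a connected sum. The paper needs Lemma~\ref{lem:0_sum} and Proposition~\ref{prop:prime} (JSJ structure of $K(0)$ for composite $K$, plus the $6$-theorem to rule out the two candidate satellite surgeries) to justify this restriction. You should also note that friendship is defined via \emph{orientation-preserving} diffeomorphism, and SnapPy's isometry search on closed manifolds does not report orientation; the paper drills out curves to certify this.

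Third, your negative trace obstruction is misstated. It is not true (or at least not established) that the set of symmetry-exceptional slopes is a trace invariant whose discrepancy obstructs a homeomorphism $X(K)\cong X(K')$. The actual mechanism is Boyer's theorem: a $0$-surgery diffeomorphism $\varphi$ extends to a trace homeomorphism if and only if its parity (the parity of the intersection form of $X(K')\cup_\varphi -X(K)$) is even. To rule out \emph{all} extensions one must show every $0$-surgery diffeomorphism is odd; Proposition~\ref{prop:ex_sym} does this by showing that if $K$ is hyperbolic and $0$ is \emph{not} a symmetry-exceptional slope of $K$, then all $0$-surgery diffeomorphisms $K(0)\to K'(0)$ have the same parity, so exhibiting a single explicit odd one (from an annulus twist or RBG link) suffices. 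Your version inverts the role of symmetry-exceptional slopes: their \emph{absence} is the hypothesis that makes the parity argument work, not a piece of data to be compared between the two knots. You also omit the Arf-invariant criterion of Manolescu--Piccirillo, which is how several of the $26$ positive trace identifications at the $C^0$ level are obtained.
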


\begin{table}[htbp] 
	\caption{Each row represents a pair of friends. The column \textit{DT/B name} contains the DT name, its Burton name, or an interval in which the crossing number lies. The column \textit{census name} contains the SnapPy census name or an interval in which the tetrahedral complexity lies.}
	\label{tab:doubles}
 {\scriptsize
\begin{tabular}{|c|c|c||c|c|c||c|}
\hline
DT/B name & census name  & $g_4$  & DT/B name & census name & $g_4$  & traces \\  
\hline
\hline
$K6a1$ & $s912$ & $1$  &$19nh\_78$ & $[10,11]$ & $1$  & $C^\infty$  \\  
\hline
$K9n4$ & $-m199$ & $1$  & $-18nh\_23$ & $o9\_34801$ & $1$ & $C^\infty$ \\
\hline

$K10n10$ & $-t12200$ & $0$  & $-16nh\_17$ & $-t11532$ & $0$  & not $C^0$  \\  
\hline
$K10n10$ & $-t12200$ & $0$ & $[17,28]$ & $o9\_43446$ & $0$  & $C^\infty$ \\ 
\hline
$K10n13$ & $m201$ & $1$ & $[17,23]$ & $o9\_34818$ & $1$ & $C^\infty$ \\  	
\hline

$K11n38$ & $-m224$ & $1$ & $-16nh\_9$ & $v3093$ & $1$  & $C^\infty$ \\  			
\hline
$K11n49$ & $-v3536$ & $0$  &$K15n103488$ & $-v3423$ & $0$  &  \\  
\hline
$K11n49$ & $-v3536$ & $0$ & $[17,27]$ & $o9\_42735$ & $0$ & $C^\infty$ \\
\hline
$K11n116$ & $-t12748$ & $0$  & $17nh\_28$ & $-t12607$ & $0$ & $C^\infty$ \\ 		
\hline	

$K12n121$ & $s239$ & $1$&$[17,25]$ & $-t11441$ & $1$ & \\ 
\hline
$K12n200$ & $t09735$ & $1$  &$[17,23]$ & $t11748$ & $1$  & \\  
\hline
$K12n309$ & $-v3195$ & $0$  &$-K14n14254$ & $v2508$ & $0$& not $C^0$ \\  					
\hline
$K12n318$ & $-o9\_40519$ & $0$  & $-18nh\_32$ & $-o9\_39433$ & $0$ &  not $C^0$ \\  	
\hline

$K13n469$ & $v2272$ & $0$  &$-K13n469$ & $-v2272$ & $0$ & $C^\infty$ \\  					
\hline
$K13n572$ & $10$ & $1$  & $-K15n89587$ & $[10,11]$ & $1$  & $C^\infty$ \\  					
\hline
$K13n1021$ & $-t09900$ & $1$  & $[17,28]$ & $-o9\_34908$ & $1$   & \\ 
\hline
$K13n1021$ & $-t09900$ & $1$  & $K15n101402$ & $-o9\_43876$ & $1$&  \\
\hline
$K13n2527$ & $10$ & $1$  &$-K15n9379$ & $t12270$ & $1$ & $C^\infty$ \\  					
\hline
$K13n3158$ & $o9\_42515$ & $0$ & $19nh\_40$ & $o9\_41909$ & $0$&  \\  	
\hline

$K14n3155$ & $10$ & $1$ & $-K14n3155$ & $10$ & $1$&  \\  				
\hline
$K14n3155$ & $10$ & $1$  &$[16,20]$ & $t11462$ & $1$ & \\  				
\hline
$K14n3155$ & $10$ & $1$  &$[16,20]$ & $-t11462$ & $1$  & \\  				
\hline
$K14n3611$ & $o9\_33568$ & $1$  & $[16,25]$ & $-o9\_27542$ & $1$ &$C^\infty$ \\ 				
\hline	
$K14n5084$ & $-o9\_33833$ & $1$ & $[23,35]$ & $-o9\_37547$ & $1$ & $C^0$ \\ 				
\hline

$K15n19499$ & $o9\_37768$ & $0$  &$K15n153789$& $10$ & $0$ & $C^\infty$ \\ 
\hline
$K15n19499$ & $o9\_37768$ & $0$   & $[17,26]$ & $o9\_31828$ & $0$ & $C^\infty$ \\ 
\hline
$K15n94464$ & $[10,11]$ & $1$  &$[17,24]$ & $-o9\_40081$ & $1$ & $C^\infty$ \\  	
\hline
$K15n101402$ & $-o9\_43876$ & $1$  & $[17,28]$ & $-o9\_34908$ & $1$ &  \\ 
\hline
$K15n103488$ & $-v3423$ & $0$  &$[17,27]$ & $o9\_42735$ & $0$ &  \\  
\hline
$K15n153789$& $10$ & $0$  &$[17,26]$ & $o9\_31828$ & $0$& $C^\infty$ \\ 
\hline

$16nh\_17$ & $t11532$ & $0$  & $[17,28]$ & $-o9\_43446$ & $0$ & not $C^0$ \\ 
\hline

$18nh\_2$ & $v0595$ & $1$  & $[19,30]$ & $t12120$ & $1$ &  \\ 	
\hline
$18nh\_7$ & $v2869$ & $1$  & $[16,22]$ & $t12388$ & $1$ & $C^0$ \\  	
\hline	
$18nh\_16$ & $-t10974$ & $0$  & $[17,32]$ & $o9\_39967$ & $0$  &   \\  				
\hline

$19nh\_4$ & $t07281$ & $0$  & $[23,42]$ & $o9\_34949$ & $0$ & $C^\infty$ \\  
\hline
$19nh\_4$ & $t07281$ & $0$& $[17,31]$ & $o9\_39806$ & $0$ &  \\
\hline

$[16,20]$ & $t11462$ & $1$ &$[16,20]$ & $-t11462$ & $1$  &$C^\infty$\\  				
\hline
$[17,24]$ & $t11900$ & $1$  &$[16,28]$ & $-o9\_40803$ & $1$  & $C^0$\\  					
\hline
$[17,31]$ & $o9\_22951$ & $[0,1]$  &$[17,31]$ & $-o9\_22951$ & $[0,1]$ & $C^\infty$ \\  					
\hline
$[17,31]$ & $o9\_39806$ & $0$ & $[23,42]$ & $o9\_34949$ & $0$  &\\ 
\hline
$[16,26]$ & $o9\_41058$ & $[0,1]$ & $[16,26]$ & $-o9\_41058$ & $[0,1]$  &$C^\infty$\\  					
\hline
\end{tabular}
}
\end{table}

Theorems~\ref{thm:cc}, \ref{thm:tt} and~\ref{thm:traces} follow directly from Theorem~\ref{thm:census}.
In Table~\ref{tab:doubles} we also list the smooth $4$-genus $g_4$ of these knots. In particular, it turns out that all friends in our census share the same smooth $4$-genus. (They are either \textit{slice}, i.e.\ have smooth $4$-genus $0$, or have smooth $4$-genus $1$.) In particular, this rules out a large class of potential candidates for pairs
of knots with the same $0$-surgery that could be used to construct a
counterexample to the smooth $4$-dimensional Poincaré conjecture. If a pair of friends leading to a counterexample to the smooth
$4$-dimensional Poincaré conjecture exists, at least one of them needs to have large crossing number (at least $16$) and tetrahedral complexity (at least $10$). Thus, a brute force approach like that done in this article is not very likely to find such a pair of friends. Another computational strategy to search for a pair of friends with different sliceness statuses is explained in~\cite{Dunfield_exterior_to_link} and studied further in~\cite{Dunfield_Gong}. 

We also remark that all friends from our census share the same $\tau$-invariant from Heegaard Floer homology (which is known to not be a trace invariant) and the same  $\nu$-invariant (which is determined by the trace)~\cite{nu_trace}.

\subsection{Code and data}

The code and additional data accompanying this paper can be accessed at the GitHub page~\cite{GitHub}.

\subsection*{Acknowledgements}
We are happy to thank Ben Burton and Jonathan Spreer for explaining how to search through the Pachner graph with Regina, L\'eo Mousseau for help with the $4$-genus computations, Lisa Piccirillo for useful discussions and comments, Marithania Silvero for pointing us to~\cite{Ribbon_ML} and~\cite{Thistlethwaite}, and Morwen Thistlethwaite for making his data of the prime $20$ crossing knots available to us. Moreover, we thank the referee for careful proofreading.

\subsection*{Individual grant support}
T.A.\ is supported by the Research Promotion Program for Acquiring Grants in-Aid for Scientific Research (KAKENHI) in Ritsumeikan University. 

MK is supported by the DFG, German Research Foundation, (Project: 561898308) and by the SFB/TRR 191 \textit{Symplectic Structures in Geometry, Algebra and Dynamics}; by a Ram\'on y Cajal grant (RYC2023-043251-I) and the project PID2024-157173NB-I00 funded by MCIN/AEI/10.13039/50110001\-1033, by ESF+, and by FEDER, EU; and by a VII Plan Propio de Investigación y Transferencia (SOL2025-36103) of the University of Sevilla.

\section{The low-crossing knots}\label{sec:low}
In this section, we work with the low-crossing knots, i.e.\ the prime knots that have diagrams of at most $15$ crossings.
We start with the classification of their $0$-surgeries. 

\begin{thm}\label{thm:classification}
    Tables~\ref{tab:non_hyp_surgeries} and~\ref{tab:non_hyp_surgeries2} list the Regina names of all non-hyperbolic $0$-surgeries of prime knots with at most $15$ crossings. Any knot that is not contained in that table is proven to have a hyperbolic $0$-surgery. The list of the hyperbolic volumes of these $0$-surgeries can be accessed at~\cite{GitHub}.
\end{thm}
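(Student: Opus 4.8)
The plan is to carry out a rigorous computer-assisted classification over the complete tabulation of prime knots with at most $15$ crossings (the Hoste--Thistlethwaite census, available through KnotInfo~\cite{KnotInfo}; there are $313{,}230$ of them), treating the knots one at a time. For each knot $K$ I would build a triangulation of its $0$-surgery $K(0)$ in SnapPy~\cite{SnapPy} by Dehn filling the knot exterior along the longitudinal $0$-slope. The task is then to split this finite list into hyperbolic and non-hyperbolic $0$-surgeries, attaching to each a \emph{rigorous} certificate rather than merely a numerical indication, so that the table of non-hyperbolic surgeries is provably complete and everything off the table is provably hyperbolic.

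The first pass handles the hyperbolic majority. I would run SnapPy's verified routines on every $K(0)$: these use interval arithmetic (a Newton--Kantorovich/Krawczyk test applied to the gluing equations) to upgrade an approximate hyperbolic structure to a rigorous proof of hyperbolicity, and they return a verified enclosure of the hyperbolic volume. Each manifold for which this succeeds is certified hyperbolic and its volume recorded for the data at~\cite{GitHub}. Since verification is sensitive to the input triangulation, the cases that fail at first are retriangulated by random Pachner sequences and retried, which clears all but finitely many knots.

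The manifolds that survive are the candidates for non-hyperbolic $0$-surgeries, and here lies the real content: a failure of the verification routine is \emph{not} a proof of non-hyperbolicity and must be replaced by a positive certificate. Recall that by Gabai's theorem the $0$-surgery on a nontrivial knot is irreducible, so that no reducible manifold or copy of $S^1\times S^2$ arises, and by geometrization any non-hyperbolic $K(0)$ is either Seifert fibered or contains an essential torus. For each surviving $K(0)$ I would therefore pass the simplified triangulation to Regina~\cite{Regina} and certify exactly one of these: a recognized Seifert fibered (blocked/graph) structure, or an incompressible torus produced by Regina's normal-surface machinery; either certificate rules out a complete hyperbolic metric outright. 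The resulting identifications are the Regina names recorded in Tables~\ref{tab:non_hyp_surgeries} and~\ref{tab:non_hyp_surgeries2}.

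Finally I would verify completeness: the verified-hyperbolic set and the positively identified non-hyperbolic set together must exhaust the whole list, leaving no knot in an undecided state. The bulk hyperbolicity verification, although computationally expensive at this scale, is essentially routine once the triangulations are good; the main obstacle is the non-hyperbolic side, where every exceptional $0$-surgery must be pinned down by an explicit, rigorous certificate (a recognized fibering or a located essential torus). Handling the finitely many stubborn cases---those that neither verify as hyperbolic nor fall to Regina's immediate recognition---by guided normal-surface searches for an essential torus is where the genuine difficulty concentrates.
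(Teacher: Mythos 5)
Your proposal matches the paper's proof: both certify hyperbolicity via SnapPy's verified volume computation (retriangulating or raising precision on failures), and then handle the remaining candidates by passing them to Regina-based recognition to obtain a positive certificate of non-hyperbolicity (the paper uses Dunfield's recognition code, which produces exactly the Regina names in the tables). The approach and the division of labor between the two tools are essentially identical.
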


The complete list of prime, non-hyperbolic knots of at most $15$ crossings and their descriptions as torus knots or satellite knots is given in Table~\ref{tab:non_hyp_knots}. Any $0$-surgery on such a knot is again non-hyperbolic.

\begin{proof}[Proof of Theorem~\ref{thm:classification}]
SnapPy has implemented a verified version of the volume function, that if successful, returns an interval of small length with rational boundaries that is guaranteed to contain
the exact value of the hyperbolic volume. In that case, the manifold is also proven to be hyperbolic. If unsuccessful, an error is returned. In that case, the manifold might be hyperbolic or not.

We use that verified volume function in SnapPy to compute for each $0$-surgery a small interval in which the exact value of the volume lies. If SnapPy fails to compute the verified volume, we try to find a better triangulation where it works or raise the precision for the computations. If both methods fail to compute the volume, we use the Regina recognition code~\cite{Dunfield_census}, cf.~\cite{FPS_code} to recognize the triangulation of the $0$-surgery as a triangulation of a non-hyperbolic manifold. This successfully classifies all $0$-surgeries of the low-crossing knots. 
\end{proof}

\begin{table}[htbp] 
	\caption{This table collects all non-hyperbolic $0$-surgeries along knots with at most $15$ crossings, together with their Regina names \cite{Dunfield_census}. Here $A$ stands for an annulus, $P$ for a pair of pants, and $M$ for a M\"obius strip. If $X$ is a census manifold with $2$-boundary components, then $JSJ(X)$ denotes a closed manifold obtained by gluing the boundaries of $X$ together with some diffeomorphism.}
	\label{tab:non_hyp_surgeries}
\begin{tabular}{|c|c|}
\hline
DT name & non-hyperbolic name \\  
\hline
 $K3a1$ & $SFS \,[S^2: (2,1) (3,1) (6,-5)]$  \\ 
 $K4a1$ & $T^2 \times I / [ 2,1 | 1,1 ]$  \\ 
 $K5a1$ & $SFS \,[A: (2,1)] / [ 0,1 | 1,-1 ]$  \\ 
 $K5a2$ & $SFS \,[S^2: (2,1) (5,2) (10,-9)]$  \\ 
 $K6a3$ & $SFS \,[A: (2,1)] / [ 0,1 | 1,-2 ]$  \\ 
 $K7a4$ & $SFS \,[A: (3,2)] / [ 0,1 | 1,-1 ]$  \\ 
 $K7a6$ & $JSJ\big(SFS \,[A: (2,1)] \cup SFS \,[A: (2,1)]\big)$  \\ 
 $K7a7$ & $SFS \,[S^2: (2,1) (7,3) (14,-13)]$  \\ 
 $K8a11$ & $SFS \,[A: (3,1)] / [ 0,1 | 1,-2 ]$  \\ 
 $K8a18$ & $JSJ\big(SFS \,[A: (2,1)]\cup SFS \,[A: (2,1)]\big)$  \\
 $K8n1$ & $SFS \,[D^2: (2,1) (2,1)] \cup_m SFS \,[D^2: (3,1) (3,2)], m = [ 0,1 | 1,0 ]$  \\ 
 $K8n3$ & $SFS \,[S^2: (3,2) (4,1) (12,-11)]$  \\ 
   
 $K9a27$ & $SFS \,[A: (4,3)] / [ 0,1 | 1,-1 ]$  \\ 
 $K9a36$ & $JSJ\big(SFS \,[A: (2,3)]\cup SFS \,[A: (3,1)]\big) $  \\ 
 $K9a40$ & $JSJ\big(m202)$  \\ 
 $K9a41$ & $SFS \,[S^2: (2,1) (9,4) (18,-17)]$  \\ 
 $K9n5$ & $SFS \,[A: (2,1)] / [ -1,3 | 1,-2 ]$  \\ 

 $K10a75$ & $SFS \,[A: (4,1)] / [ 0,1 | 1,-2 ]$  \\ 
 $K10a117$ & $JSJ\big(SFS \,[A: (2,1)]\cup SFS \,[A: (3,5)]\big) $  \\ 
  $K10n13$ & $SFS \,[S^2: (2,1) (5,1) (10,-7)]$  \\ 
 $K10n21$ & $SFS \,[S^2: (3,1) (5,3) (15,-14)]$  \\ 
 $K10n29$ & $SFS \,[D^2: (2,1) (2,1)] \cup_m  SFS \,[D^2: (3,1) (3,2)], m = [ -1,2 | 0,1 ]$  \\ 
 
 $K11a247$ & $SFS \,[A: (5,4)] / [ 0,1 | 1,-1 ]$  \\ 
 $K11a343$ & $JSJ\big(SFS \,[A: (2,1)]\cup SFS \,[A: (4,7)]\big) )$  \\ 
 $K11a362$ & $JSJ\big(m357\big)$  \\ 
 $K11a363$ & $JSJ\big(SFS \,[A: (3,1)]\cup SFS \,[A: (3,2)]\big) $  \\ 
 $K11a367$ & $SFS \,[S^2: (2,1) (11,5) (22,-21)]$  \\ 
 $K11n139$ & $SFS \,[A: (2,1)] / [ 2,5 | 1,2 ]$  \\ 
 $K11n141$ & $JSJ\big(m125\big)$  \\ 
 
 $K12a803$ & $SFS \,[A: (5,1)] / [ 0,1 | 1,-2 ]$  \\ 
 $K12a1166$ & $JSJ\big(SFS \,[A: (2,1)]\cup SFS \,[A: (4,1)]\big) )$  \\ 
 $K12a1287$ & $JSJ\big(SFS \,[A: (3,1)]\cup SFS \,[A: (3,4)]\big) $  \\ 
 $K12n121$ & $SFS \,[D^2: (2,1) (2,1)] \cup_m  SFS \,[D^2: (2,1) (3,1)], m = [ 5,1 | 4,1 ]$  \\ 
 $K12n582$ & $SFS \,[D^2: (2,1) (2,1)] \cup_m  SFS \,[D^2: (3,1) (3,2)], m = [ -2,3 | -1,2 ]$  \\ 
 $K12n721$ & $JSJ\big(SFS \,[D^2: (2,1) (2,-1)]\cup m043\big)$  \\ 
 
 $K13a3143$ & $SFS \,[A: (6,5)] / [ 0,1 | 1,-1 ]$  \\ 
  $K13a4573$ & $JSJ\big(SFS \,[A: (2,1)]\cup SFS \,[A: (5,4)]\big) $  \\ 
 $K13a4843$ & $JSJ\big(s548\big)$  \\ 
 $K13a4856$ & $JSJ\big(SFS \,[A: (3,1)]\cup SFS \,[A: (4,3)]\big) $  \\ 
 $K13a4873$ & $JSJ\big(s876\big)$  \\ 
 $K13a4878$ & $SFS \,[S^2: (2,1) (13,6) (26,-25)]$  \\ 
 
 $K13n469$ & $JSJ\big(SFS \,[D^2: (2,1) (2,-1)]\cup m004\big)$  \\ 

 $K13n3521$ & $JSJ\big(m329\big)$  \\ 
 $K13n3523$ & $SFS \,[A: (2,1)] / [ 2,7 | 1,3 ]$  \\ 
 
\hline
 
\hline
\end{tabular}
\end{table}

\begin{table}[htbp] 
	\caption{Table~\ref{tab:non_hyp_surgeries} continued.}
	\label{tab:non_hyp_surgeries2}
\begin{tabular}{|c|c|}
\hline
DT name & non-hyperbolic name \\  
\hline
 $K13n3594$ & $JSJ\big(m292\big)$  \\ 

 $K13n3596$ & $JSJ\big(SFS \,[D^2: (2,1) (3,-2)]\cup SFS \,[P: (1,3)]\big) $  \\ 

 $K13n3663$ & $JSJ\big(SFS \,[D^2: (2,1) (3,-2)]\cup SFS \,[P: (1,1)]\big) $  \\ 

 $K13n4587$ & $SFS \,[D^2: (2,1) (3,1)] \cup_m  SFS \,[D^2: (2,1) (14,1)], m = [ -3,5 | -1,2 ]$  \\ 
 $K13n4639$ & $SFS \,[D^2: (2,1) (3,1)] \cup_m  SFS \,[D^2: (2,1) (10,1)], m = [ -5,7 | -2,3 ]$  \\ 

$K14a12741$ & $SFS \,[A: (6,1)] / [ 0,1 | 1,-2 ]$  \\ 
 $K14a17730$ & $JSJ\big(SFS \,[A: (2,1)]\cup SFS \,[A: (5,9)]\big) $  \\ 
  $K14a19429$ & $JSJ\big(SFS \,[A: (3,4)]\cup SFS \,[A: (4,1)]\big) $  \\ 
  
 $K14n3611$ & $JSJ\big(M \Tilde\times S^1\cup  m015 \big)$  \\ 
 $K14n18212$ & $SFS \,[D^2: (2,1) (2,1)] \cup_m  SFS \,[D^2: (3,1) (3,2)], m = [ -3,4 | -2,3 ]$  \\ 
 $K14n19265$ & $JSJ\big(v3319\big)$  \\ 

 $K14n21881$ & $SFS \,[S^2: (3,2) (7,2) (21,-20)]$  \\ 
 $K14n21882$ & $SFS \,[D^2: (2,1) (2,1)] \cup_m  SFS \,[D^2: (3,1) (6,1)], m = [ 1,1 | 0,1 ]$  \\ 
 $K14n22073$ & $SFS \,[D^2: (2,1) (3,1)] \cup_m  SFS \,[A: (2,1) (2,1)] \cup_n SFS \,[D^2: (2,1) (3,2)]$,\\
 &$ m = [ 1,-1 | 0,-1 ], n = [ 0,1 | 1,1 ]$  \\ 
 $K14n22180$ & $JSJ\big(SFS \,[D^2: (2,1) (3,-2)]\cup SFS \,[$P$: (1,1)]\big) $  \\ 
 $K14n22185$ & $JSJ\big(SFS \,[D^2: (2,1) (2,-1)]\cup  m137\big)$  \\ 
 $K14n22589$ & $JSJ\big(m129\big)$  \\ 
 $K14n24553$ & $JSJ\big(SFS \,[D^2: (2,1) (2,-1)]\cup  s663\big)$  \\ 
 $K14n26039$ & $JSJ\big(P \times  S^1\cup SFS \,[D^2: (2,1) (3,-2)]\big) $  \\ 

 $K15a54894$ & $SFS \,[A: (7,6)] / [ 0,1 | 1,-1 ]$  \\ 
 $K15a78880$ & $JSJ\big(SFS \,[A: (2,1)]\cup SFS \,[A: (6,5)]\big) $  \\ 
 $K15a84844$ & $JSJ\big( v1203\big)$  \\ 
 $K15a84969$ & $JSJ\big(SFS \,[A: (3,1)]\cup SFS \,[A: (5,4)]\big) $  \\ 
 $K15a85213$ & $JSJ\big( v2601\big)$  \\ 
 $K15a85234$ & $JSJ\big(SFS \,[A: (4,1)]\cup SFS \,[A: (4,7)]\big) $  \\ 
 $K15a85257$ & $JSJ\big( v3461\big)$  \\ 
 $K15a85263$ & $SFS \,[S^2: (2,1) (15,7) (30,-29)]$  \\

 $K15n19499$ & $JSJ\big(SFS \,[D^2: (2,1) (2,-1)]\cup  m032\big)$  \\ 
 $K15n40211$ & $SFS \,[D^2: (2,1) (3,1)] \cup_m  SFS \,[D^2: (2,1) (18,1)], m = [ -1,3 | 0,1 ]$  \\ 
 $K15n41185$ & $SFS \,[S^2: (4,3) (5,1) (20,-19)]$  \\ 
 $K15n43522$ & $SFS \,[A: (2,1)] / [ 3,11 | 2,7 ]$  \\ 
 $K15n48968$ & $JSJ\big(M\Tilde\times S^1\cup  v2817\big)$  \\ 
 $K15n51748$ & $JSJ\big(SFS \,[D^2: (2,1) (2,-1)]\cup m137\big)$  \\ 
 
 $K15n59184$ & $JSJ\big(P\times S^1\cup SFS \,[D^2: (2,1) (3,-2)]\big) $  \\ 
 $K15n72303$ & $JSJ\big(M\Tilde\times S^1\cup  s493\big)$  \\ 
 
 $K15n112477$ & $JSJ\big( s503\big)$  \\ 
 $K15n112479$ & $SFS \,[A: (2,1)] / [ 2,9 | 1,4 ]$  \\ 
 $K15n113773$ & $JSJ\big( s843\big)$  \\ 
 $K15n113775$ & $JSJ\big( m129\big)$  \\ 
 $K15n113923$ & $JSJ\big( s441\big)$  \\ 
 $K15n115375$ & $JSJ\big( m129\big)$  \\ 
 $K15n115646$ & $JSJ\big(SFS \,[D^2: (2,1) (3,-2)]\cup SFS \,[P: (1,1)]\big) $  \\ 
 $K15n124802$ & $SFS \,[D^2: (2,1) (3,1)] \cup_m  SFS \,[D^2: (2,1) (6,1)], m = [ -7,9 | -3,4 ]$  \\ 
 $K15n142188$ & $JSJ\big(SFS \,[D^2: (2,1) (3,-2)]\cup 1\textrm{-cusped hyperbolic}\big)$  \\ 
 $K15n153789$ & $JSJ\big(SFS \,[D^2: (2,1) (2,-1)]\cup  m032\big)$  \\ 
 $K15n156076$ & $JSJ\big(SFS \,[D^2: (2,1) (3,-2)]\cup 1\textrm{-cusped hyperbolic}\big)$  \\ 
 $K15n160926$ & $JSJ\big(t11128\big)$  \\ 
 $K15n164338$ & $JSJ\big( s906\big)$\\
\hline
 
\hline
\end{tabular}
\end{table}

Next, we create a census of all friends $(K,K')$ such that $K$ and $K'$ are both prime and have at most $15$ crossings. It is known that knots with diffeomorphic $0$-surgeries share the same Alexander polynomial. So we first run through the list of all prime knots with at most $15$ crossings, compute their Alexander polynomials, and sort the knots into groups that have the same Alexander polynomials. If it turns out that some group contains only a single knot $K$, then this knot cannot have a friend that is prime and that has at most $15$ crossings, and we do not consider $K$ anymore. (The possibility that $K$ and $-K$ are not-isotopic but still are friends is discussed in the proof of Proposition~\ref{prop:mirror} below.) 

So in the following, we only consider groups of knots with the same Alexander polynomials that contain more than one knot. In the next step, we use the results from Theorem~\ref{thm:classification} to further split up the groups into knots that have all non-hyperbolic or all hyperbolic $0$-surgeries. If the $0$-surgery is hyperbolic, we also split into groups of knots where the intervals, containing the volumes of the $0$-surgeries, overlap. By using the verified symmetry group computations in SnapPy we can further distinguish a few more $0$-surgeries.

Some of the non-hyperbolic $0$-surgeries we can distinguish by their Regina names \cite{Dunfield_census}. For example, using the pieces in their JSJ decomposition and the classification of Seifert fibered spaces. Note, however, that the gluing map of the JSJ pieces is not returned. So if two Regina names agree or disagree, that does not necessarily mean that the manifolds are diffeomorphic or non-diffeomorphic.

Next, we know that several other knot invariants agree for knots that share the same $0$-surgeries. If two knots share the same $0$-surgery, they have the same $3$-genus, the same fiberedness status~\cite{Gabai}, the same signatures, and isomorphic knot Floer homologies groups in certain gradings~\cite[Corollary~4.5]{Ozsvath_Szabo_holomorphic}, cf.~\cite[Remark~2.2]{Baldwin_Sivek_0_inf}. We use Szab\'o's knot Floer homology calculator~\cite{Szabo_calculator} to compute these invariants and refine our groups further.

For the remaining groups of knots that might share the same $0$-surgeries, we try to distinguish their $0$-surgeries by showing that their fundamental groups are not isomorphic, as done for example in~\cite{Burton,Dunfield_Lspace}. If $H$ is a subgroup of a fundamental group $G$ of a $0$-surgery and $N$ the core of $H$, then the tuple 
\begin{equation*}
    \big([G:H],[G:N],H^{ab}, N^{ab}\big)
\end{equation*}
is an invariant of the subgroup $H < G$. We use SnapPy, Regina, and GAP~\cite{GAP4} to determine all conjugacy classes of subgroups of $G$ up to index $7$ and compute for each the above invariants. If the collections of these invariants do not agree, then the fundamental groups are not isomorphic and thus the $0$-surgeries are not diffeomorphic.

This was enough to distinguish the $0$-surgery of most pairs of low-crossing knots. However, for around $200$ pairs of knots these invariants either all agreed or were too time-consuming to compute. The remaining pairs we could distinguish using their length spectra. The verified computations of the length spectra in SnapPy up to length $4$ were enough to distinguish all but $6$ pairs.

Among the remaining $6$ pairs of knots, we check if the $0$-surgeries are diffeomorphic. For hyperbolic $0$-surgeries this is done with SnapPy by searching for an isometry. For the non-hyperbolic $0$-surgeries we load the triangulations to Regina and search through the Pachner graphs for a combinatorial equivalence between the triangulations.

Note that the SnapPy search for isometries between hyperbolic $3$-manifold also returns orientation-reversing isometries. But we want to have orientation-preserving isometries. For that, we use the following method to search for orientation-preserving isometries. For closed manifolds, SnapPy does not give any information on whether the isometry is orientation-preserving or not. To determine if two closed $3$-manifolds $M_1$ and $M_2$ are orientation preserving-diffeomorphic, we can use SnapPy to drill out curves $c_1$ and $c_2$ from $M_1$ and $M_2$. For cusped manifolds, SnapPy also displays the action of the isometry on the cusp from which one can read off whether the isometry is orientation-preserving or not. Then we can search for an orientation-preserving isometry from $M_1\setminus c_1$ to $M_2\setminus c_2$ that fixes the meridians of $c_1$ and $c_2$. Such an isometry extends uniquely over the Dehn fillings to an orientation-preserving isometry
$M_1 \to M_2$, and by construction this extension maps the core curve $c_1$ to $c_2$.

In total, we find $6$ pairs of friends among the low-crossing knots (and below two more pairs, each consisting of a knot and its mirror). These are displayed in Table~\ref{tab:doubles}. All other pairs of prime knots with at most $15$ crossings have non-diffeomorphic $0$-surgeries. 

\section{Connected sums}

Next, we show that no knot among the low crossing knots has $0$-surgery equal to the $0$-surgery of a connected sum.
For that, we first prove the following general lemma.

 \begin{lem} \label{lem:0_sum}
Let $K=K_1\#\cdots\#K_n$ be a connected sum of non-trivial, non-satellite knots $K_1,\ldots,K_n$. If $K'$ is another non-trivial connected sum such that $K$ and $K'$ have orientation-preserving diffeomorphic $0$-surgeries, then $K$ and $K'$ are isotopic.
 \end{lem}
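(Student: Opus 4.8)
The plan is to reconstruct the prime summands of $K$ from the $0$-surgery $K(0)$ through its canonical torus decomposition, to do the same for $K'$, and to use the orientation-preserving diffeomorphism $K(0)\cong K'(0)$ to force the two collections of summands to coincide. Write $E(J)=S^3\setminus\nu J$ for a knot exterior. The first step is to recall the composing-space structure of the exterior of a connected sum: $E(K)$ is the union of $E(K_1),\dots,E(K_n)$ with a \emph{composing space} $\Sigma_{0,n+1}\times S^1$ (the product of an $(n+1)$-holed sphere with a circle), glued along the tori $T_i=\partial E(K_i)$ so that on each $T_i$ the circle fibre is the meridian $\mu_i$ and a section is the longitude $\lambda_i$; the one remaining boundary torus is $\partial E(K)$, carrying the fibre $\mu$ and a section $\lambda$. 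Performing $0$-surgery fills $\partial E(K)$ along $\lambda$, i.e.\ along a section slope, which caps off one base circle by a disc without introducing an exceptional fibre. Hence the composing space becomes $C:=\Sigma_{0,n}\times S^1$ and
\[
K(0)=E(K_1)\cup_{T_1}C\cup_{T_2}E(K_2)\cup\cdots\cup_{T_n}E(K_n),
\]
where on each $T_i$ the fibre of $C$ is $\mu_i$; as a sanity check, $H_1(K(0))\cong\Z$.

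Next I would verify that this is the JSJ decomposition. Since each $K_i$ is non-trivial and non-satellite, $E(K_i)$ is irreducible, atoroidal and not a solid torus, hence a single geometric piece (hyperbolic, or Seifert fibred over a disc with two exceptional fibres when $K_i$ is a torus knot). The tori $T_i$ are incompressible, and no two adjacent pieces merge: on $T_i$ the fibre of $C$ is $\mu_i$, whereas the Seifert fibre of a torus-knot exterior meets the boundary in the cabling slope $p_iq_i$, distinct from the meridian, so even torus-knot arms stay separated from $C$ by a genuine JSJ torus. Thus, for $n\geq3$, the JSJ graph of $K(0)$ is a star: a central vertex $C$ of valence $n$ whose $n$ leaves are the $E(K_i)$.

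Now the matching. Because the JSJ decomposition is canonical up to isotopy and the diffeomorphism is orientation-preserving, transporting it across $K(0)\cong K'(0)$ shows that the JSJ graph of $K'(0)$ is the same star. On the other hand, the composing-space description applied to $K'=K'_1\#\cdots\#K'_m$ exhibits a central composing piece $C'$ of valence $m$, whose arms are the exteriors $E(K'_j)$; refining to the JSJ replaces each arm by the JSJ subtree of $E(K'_j)$. For the result to be a star, every such subtree must be a single vertex, so each $K'_j$ is itself non-satellite and $m=n$; moreover the central vertices correspond, and each leaf $E(K'_j)$ is orientation-preservingly homeomorphic to the matching $E(K_{\sigma(j)})$. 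By the knot-complement theorem of Gordon and Luecke, such a homeomorphism upgrades to an isotopy $K'_j\cong K_{\sigma(j)}$. Hence the two knots have the same multiset of prime summands and therefore $K\cong K'$.

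I expect the main obstacle to be establishing and verifying the composing-space/JSJ picture with full precision: computing the fibre and section slopes after the $0$-filling, checking that torus-knot summands do not coalesce with the central piece, and disposing of the low-$n$ degeneracies. The most delicate of these is the case $n=2$, where $C$ degenerates to a collar $T^2\times I$ so that $K(0)=E(K_1)\cup_\phi E(K_2)$ directly, together with the rare sub-case in which both summands are torus knots with matching fibres and $K(0)$ becomes a single Seifert fibred space; this must be handled through the classification and essential uniqueness of Seifert fibrations. Throughout, orientations must be tracked carefully so that Gordon--Luecke yields genuine isotopies rather than mirror images.
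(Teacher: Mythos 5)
Your proposal follows essentially the same route as the paper: decompose $E(K)$ into the knot exteriors $E(K_i)$ plus a composing space $\Sigma_{0,n+1}\times S^1$, observe that $0$-surgery caps off the outer boundary along the section slope to give $\Sigma_{0,n}\times S^1$, identify the result as the JSJ decomposition of $K(0)$, invoke uniqueness of JSJ to match the pieces, and finish with Gordon--Luecke. Two of your refinements are genuinely worth keeping: your identification of the fibre with the meridian and the sections with the longitudes is the correct one (a quick $H_1$ check confirms it, and it is what makes $\lambda_K$ the outer section slope), and your explicit check that a torus-knot arm cannot coalesce with the central piece, together with your flagging of the $n=2$ sub-case where both summands are torus knots with matching fibre slopes $p_1q_1=p_2q_2$ (so that $K(0)$ is a single closed Seifert fibred space and the swallow torus is \emph{not} a JSJ torus), addresses degeneracies that the paper's proof passes over silently. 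You do leave that Seifert sub-case unresolved; it needs the classification of Seifert fibred spaces over $S^2$ with four exceptional fibres to see that the unordered Seifert invariants still recover the two torus-knot summands.

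The one step that does not quite close is the last sentence: ``the same multiset of prime summands and therefore $K\cong K'$.'' Gordon--Luecke gives you each $K'_j$ as an \emph{unoriented} knot isotopic to $K_{\sigma(j)}$, but the connected sum depends on a choice of orientation of each summand, and $K_1\#K_2$ need not be isotopic to $K_1\#K_2^r$ when $K_2$ is non-invertible. So equality of the multisets of exteriors is not yet equality of the connected sums. The missing ingredient, which the paper supplies, is that the orientations of the summands are encoded in the gluing maps of the JSJ decomposition (reversing the orientation of $K_i$ reverses both $\mu_i$ and $\lambda_i$ on $T_i$), and uniqueness of the JSJ decomposition matches the gluing maps as well as the pieces; this pins down the oriented summands and hence $K\cong K'$.
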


\begin{proof}
It is well-known (see for example~\cite{Budney_JSJ}) that the exterior $E_K$ of $K$ decomposes as
\begin{equation*}
    (D_n \times S^1) \cup \bigcup_{i=1}^n E_{K_i}
\end{equation*}
where $D_n$ denotes a $2$-disk with $n$ holes. The exterior $E_{K_i}$ is glued to the $i$-th hole with a gluing map that identifies the meridian $\mu_i$ of $K_i$ with the boundary of the hole and the Seifert longitude $\lambda_i$ of $K_i$ with the $S^1$-factor. Then the longitude $\lambda_K$ of $K$ is given by the outer boundary component of $D_n$ and the meridian $\mu_K$ of $K$ is the $S^1$-factor. Since every summand $K_i$ was assumed to not be a satellite, it follows that each $K_i$ is either a hyperbolic knot or a torus knot, and thus the above decomposition is actually the JSJ decomposition of $E_K$.

Now it follows that performing the $0$-surgery on $K$ is the same as capping off the outer boundary component of $D_n$ to get a $2$-sphere $S_n$ with $n$ holes. Thus the JSJ decomposition of the $0$-surgery of $K$ is given as
\begin{equation*}
    K(0)=(S_n \times S^1) \cup \bigcup_{i=1}^n E_{K_i}
\end{equation*}
where each exterior $E_{K_i}$ is glued to the $i$-th hole with a gluing map that identifies the meridian $\mu_i$ of $K_i$ with the boundary of the hole and the Seifert longitude $\lambda_i$ of $K_i$ with the $S^1$-factor. (Note that in the case of $n=2$, the above JSJ decomposition simplifies to just $E_{K_1}\cup E_{K_2}$ such that meridians and longitudes get identified.)

Now let $K'=K'_1\#\cdots\#K'_{n'}$ be another connected sum of non-trivial, prime knots $K'_1,\ldots,K'_{n'}$ such that $K(0)$ is orientation-preserving diffeomorphic to $K'(0)$. Then the uniqueness of the JSJ decomposition implies that the JSJ decompositions of $K(0)$ and $K'(0)$ are the same, i.e.\
\begin{equation*}
    K(0)=(S_n \times S^1) \cup \bigcup_{i=1}^n E_{K_i}=K'(0).
\end{equation*}
On the other hand, we can decompose $K'(0)$ as
\begin{equation*}
    K'(0)=(S_{n'} \times S^1) \cup \bigcup_{i=1}^{\,n'} E_{K'_i}.
\end{equation*}
Note that the latter might not be the JSJ decomposition since $E_{K'_i}$ might have a non-trivial JSJ decomposition (this happens if $K'_i$ is a satellite knot). 

Next, we compare these two decompositions to show that $n=n'$ and (after renumbering) $E_{K_i}$ is orientation-preserving diffeomorphic to $E_{K'_i}$.

By assumption, each $E_{K_i}$ is either the exterior of a hyperbolic knot or a torus knot. In particular, no $E_{K_i}$ is a product. If $n=2$, then $K(0)$ contains no product and thus also $n'=2$. It follows that (after renumbering) $E_{K_i}$ is orientation-preserving diffeomorphic to $E_{K'_i}$.

If $n>2$, then the JSJ decomposition of $K(0)=K'(0)$ contains a unique piece of the form $S_n\times S^1$, and thus we have two cases. The first case is that $n'=2$ and exactly one of the $E_{K'_i}$ contains a JSJ piece of the form $S_n\times S^1$. The second case is $n=n'$, then $S_n\times S^1$ is mapped to $S_{n'}\times S^1$ and (after renumbering) $E_{K_i}$ is orientation-preserving diffeomorphic to $E_{K'_i}$. We argue that the first case cannot occur. 
Indeed, suppose that $n'=2$ and that one of the exteriors $E_{K'_i}$ contains a JSJ piece diffeomorphic to $S_n\times S^1$. Since $K'_i$ is assumed to be prime, it has to be a satellite knot. Then, by the JSJ classification of knot exteriors due to Budney~\cite{Budney_JSJ}, the JSJ decomposition of
$E_{K'_i}$ contains a piece that is the exterior of a link in $S^3$ with at least two
components. On the other hand, since none of the knots $K_i$ is a satellite by assumption, the JSJ decomposition of $K(0)$ contains no JSJ piece other than $S_n \times S^1$ which has more than one boundary component.
This contradicts the uniqueness of the JSJ decomposition. Hence, the first case cannot occur.

 In summary we have $n=n'$ and (after renumbering) $E_{K_i}$ is orientation-preserving diffeomorphic to $E_{K'_i}$. Then~\cite{Gordon_Luecke} tells us that $K_i$ is isotopic to $K'_i$. 

Note, however, that the oriented diffeomorphism type of $E_{K_i}$ only determines the chirality of $K_i$, but does not yield an orientation of the knot $K_i$. On the other hand, the connected sum depends in general on the orientation of $K_i$. The orientations of the $K_i$ can be recovered from the gluing maps of the above JSJ decomposition (since the orientations of meridian and longitude change if the orientation of the knot is changed). Thus it follows that $K$ is isotopic to $K'$.
\end{proof}

From the proof of the above lemma, we get, in particular, the following decomposition theorem for the $0$-surgery of a connected sum.

\begin{cor}\label{cor:conneted_sum}
   Let $K=K_1\#\cdots\#K_n$ be a connected sum of non-trivial knots $K_i$, then the $0$-surgery $K(0)$ decomposes as
\begin{equation*}
    K(0)=(S_n \times S^1) \cup \bigcup_{i=1}^n E_{K_i},
\end{equation*}
   where $E_{K_i}$ is the exterior of $K_i$, $S_n$ denotes a $2$-sphere with $n$ holes, and where the gluing is along incompressible tori.

   In particular, for $n=2$, it follows that the $0$-surgery is obtained by gluing the exteriors of $K_1$ and $K_2$, i.e.\ $K(0)=E_{K_1}\cup E_{K_2}$.\qed
\end{cor}

With that corollary, we are ready to prove the following proposition which allows us to restrict to prime knots.

\begin{prop}\label{prop:prime}
Let $K$ be a prime knot with at most $15$ crossings or a hyperbolic knot with tetrahedral complexity at most $9$. Then any friend of $K$ is also prime.
\end{prop}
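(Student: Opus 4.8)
The plan is to argue by contradiction. Suppose $K$ has a composite friend $K'$, and write its prime decomposition $K' = J_1 \# \cdots \# J_m$ with $m\ge 2$ and each $J_i$ nontrivial. Choosing a single connect-sum sphere and grouping the summands on its two sides exhibits $K' = A\# B$ with $A,B$ nontrivial. Exactly as in the proof of Lemma~\ref{lem:0_sum}, the decomposing annulus of $A\# B$ caps off in the surgery solid torus to an essential torus $T\subset K'(0)$ splitting it into the two knot exteriors $E_A$ and $E_B$, glued so that meridians and longitudes are matched. Hence $K(0)=K'(0)$ is toroidal, and by uniqueness of the JSJ decomposition together with the fully decomposed model from Lemma~\ref{lem:0_sum}, its JSJ pieces are precisely the $S^3$ knot exteriors $E_{J_i}$ (each further subdivided when $J_i$ is a satellite), together with a central planar piece $S_m\times S^1$ when $m\ge 3$. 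So it suffices to show that no prime $K$ of the two stated kinds has a $0$-surgery whose JSJ decomposition has this shape.

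I would first dispose of the torus-knot case: if $K$ is a prime torus knot then $K(0)$ is a Seifert fibered space over $S^2$ with at most three exceptional fibers, hence atoroidal, contradicting toroidality. Thus $K$ is a prime satellite (or hyperbolic, treated below). For $K$ prime with at most $15$ crossings, Theorem~\ref{thm:classification} lists every non-hyperbolic $K(0)$ in Tables~\ref{tab:non_hyp_surgeries}--\ref{tab:non_hyp_surgeries2} (the corresponding prime satellite knots being those of Table~\ref{tab:non_hyp_knots}), and since $K(0)$ is toroidal it must appear there. I would then inspect each entry and exclude the connect-sum shape using three observations: (i) an entry presented as a single Seifert fibered space is atoroidal and so cannot be a connect-sum surgery; (ii) a Seifert fibered space is a knot exterior in $S^3$ only if it is a torus-knot exterior, i.e.\ of the form $SFS[D^2\colon(\alpha_1,\beta_1)(\alpha_2,\beta_2)]$ with $\gcd(\alpha_1,\alpha_2)=1$, so any $JSJ$ entry containing a Seifert piece over $D^2$ with non-coprime multiplicities (such as the recurring $SFS[D^2\colon(2,1)(2,1)]$) is immediately ruled out; and (iii) for the remaining entries, all of whose pieces happen to be genuine $S^3$ knot exteriors, the connect-sum pattern additionally forces the gluing slopes to identify meridian with meridian and longitude with longitude, so it is enough to certify that one gluing slope is not a meridian.

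I would handle the hyperbolic census knots ($t\le 9$) in parallel. If $K(0)$ is hyperbolic it is atoroidal and we are done; otherwise I would compute its JSJ decomposition with SnapPy and Regina, exactly as in Section~\ref{sec:low}, and apply the same three observations. Note that the census knots are not all covered by Tables~\ref{tab:non_hyp_surgeries}--\ref{tab:non_hyp_surgeries2} (some have more than $15$ crossings), so this step genuinely requires recomputing and recognizing the relevant triangulations rather than merely quoting Theorem~\ref{thm:classification}.

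The main obstacle is step (iii). As already noted in Section~\ref{sec:low}, Regina does not return the gluing maps between JSJ pieces, so when a toroidal $K(0)$ happens to be assembled entirely from bona fide $S^3$ knot exteriors one cannot read off "connect sum versus not" from the list of pieces alone. I expect to resolve this either by producing a numerical invariant of the relevant gluing slope that distinguishes it from the meridian, or, more robustly, by invoking the uniqueness half of Lemma~\ref{lem:0_sum}: the only connected sums whose $0$-surgeries could possibly coincide with the finitely many toroidal $K(0)$ under consideration are connected sums of sufficiently small knots, and these can be enumerated and compared directly against $K(0)$ using the invariants already assembled in Section~\ref{sec:low} (Alexander polynomials, genera, knot Floer data, the covering-group invariants, and length spectra).
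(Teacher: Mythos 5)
Your overall strategy---reduce via Lemma~\ref{lem:0_sum} to the statement that no classified non-hyperbolic $0$-surgery has the shape $(S_n\times S^1)\cup\bigcup E_{K_i}$, and then inspect Tables~\ref{tab:non_hyp_surgeries} and~\ref{tab:non_hyp_surgeries2} (resp.\ the exceptional-filling data for the census knots)---is the same as the paper's, and your observations (i) and (ii) dispose of most entries just as the paper's ``we see directly'' does. The gap is in how you handle the entries that survive this sieve. After discarding the small Seifert fibered entries and those containing Seifert pieces that are not knot exteriors, exactly two candidates remain: the $0$-surgeries of $K15n142188$ and $K15n156076$, each of the form $E_{K3a1}\cup M_\pm$ with $M_\pm$ a $1$-cusped hyperbolic manifold. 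Your step (iii) presupposes that all pieces of the remaining entries ``happen to be genuine $S^3$ knot exteriors'' and that the only issue is whether the gluing slope is meridional; but for these two entries the actual question is whether the hyperbolic piece $M_\pm$ is a knot exterior in $S^3$ at all, and neither of your proposed fallbacks answers it. A ``numerical invariant of the gluing slope'' is unavailable (Regina does not return the gluing maps, as you yourself note), and the ``enumerate small connected sums'' idea stalls because you have no a priori bound identifying which knot, if any, could have exterior $M_\pm$.

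The missing ingredient is the $6$-theorem~\cite{Ag00,La00}. If $M_\pm$ were a knot exterior in $S^3$, the meridional filling would produce $S^3$, a non-hyperbolic manifold, so the meridian would be a slope of length at most $6$. One can therefore build $M_\pm$ explicitly as a filling of one component of $\pm L6a1$, using the satellite description of $K15n142188$ and $K15n156076$ from Table~\ref{tab:non_hyp_knots}, enumerate its finitely many short slopes in SnapPy, and verify that none of the corresponding fillings is even a homology sphere. This certifies that $M_\pm$ is not a knot exterior and closes the two remaining cases; without it (or some equivalent certificate) your argument does not go through. For the census knots your plan is otherwise fine, except that there is no need to recompute JSJ decompositions: the classification of their exceptional fillings is already available~\cite{Dunfield_census} and can be read off directly.
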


\begin{proof}
From Corollary~\ref{cor:conneted_sum} we know that the $0$-surgery of a non-trivial connected sum decomposes along incompressible tori as
\begin{equation*}
    (S_n \times S^1) \cup \bigcup_{i=1}^n E_{K_i}
\end{equation*}
or for $n=2$ as $E_{K_1}\cup E_{K_2}$.
However, from Tables~\ref{tab:non_hyp_surgeries} and~\ref{tab:non_hyp_surgeries2} we see directly that there are only two $0$-surgeries that might be of that form: The $0$-surgeries of $K_+=K15n142188$ and $K_-=K15n156076$ are both given by gluing the exterior $E_{K3a1}$ of the trefoil knot $K3a1$ to a $1$-cusped hyperbolic manifold $M_\pm$. To finish the proof, it is enough to show that in both cases the manifold $M_\pm$ is not the exterior of a knot in $S^3$. 

From Table~\ref{tab:non_hyp_knots} we see that $K_\pm$ are satellites of the trefoil $K3a1$ with pattern $\pm L6a1$. It follows that $M_\pm$ is given by $w_\pm$-surgery on one component\footnote{Since $L6a1$ has an isometry that interchanges the components (and preserves the framing) it does not matter which component we fill.} of $\pm L6a1$, where $w_\pm=12\pm 3$ is the writhe of the diagrams of $K_\pm$ shown in Figure~\ref{fig:knots}. From that description, we can build the manifold $M_\pm$ in SnapPy, compute the short slopes (i.e.\ those of length at most $6$) of $M_\pm$, and verify that the corresponding fillings are not homology spheres. Then the $6$-theorem~\cite{Ag00,La00} implies that $M_\pm$ is not the exterior of a knot in $S^3$.

For the census knots, all exceptional fillings are classified by Dunfield~\cite{Dunfield_census}. From his list, we can read off as above that no $0$-surgery can be a connected sum.
\end{proof}

\begin{table}[htbp] 
	\caption{The complete list of all non-hyperbolic prime knots up to $15$ crossings and their non-hyperbolic names.}
	\label{tab:non_hyp_knots}
\begin{tabular}{|c|c|}
\hline
DT name & non-hyperbolic name \\  
\hline
$K3a1$ & $(2,3)$-torus knot \\ 
$K5a2$ & $(2,5)$-torus knot \\ 
$K7a7$ & $(2,7)$-torus knot \\ 
$K8n3$ & $(3,4)$-torus knot \\ 
$K9a41$ & $(2,9)$-torus knot \\ 
$K10n21$ & $(3,5)$-torus knot \\ 
$K11a367$ & $(2,11)$-torus knot \\ 
$K13a4878$ & $(2,13)$-torus knot \\ 
$K13n4587$ & $(2,7)$-cable of $K3a1$ \\ 
$K13n4639$ & $(2,5)$-cable of $K3a1$ \\
$K14n21881$ & $(3,7)$-torus knot \\ 
$K14n22180$ & $(-3)$-fold twisted positive Whitehead double of $K3a1$ \\ 
$K14n26039$ & $(-3)$-fold twisted negative Whitehead double of $K3a1$ \\ 
$K15a85263$ & $(2,15)$-torus knot \\ 
$K15n40211$ & $(2,9)$-cable of $K3a1$ \\ 
$K15n41185$ & $(4,5)$-torus knot \\ 
$K15n59184$ & $(-2)$-fold twisted negative Whitehead double of $K3a1$ \\ 
$K15n115646$ & $(-4)$-fold twisted positive Whitehead double of $K3a1$ \\ 
$K15n124802$ & $(2,3)$-cable of $K3a1$ \\ 
$K15n142188$ & $(-3)$-fold twisted satellite of $K3a1$ with pattern $L6a1$, see Figure~\ref{fig:knots} \\ 
$K15n156076$ & $(-3)$-fold twisted satellite of $K3a1$ with pattern $-L6a1$, see Figure~\ref{fig:knots} \\ 
\hline
\end{tabular}
\end{table}

\begin{figure}[htbp]
 \centering
  \includegraphics[width=0.49\textwidth]{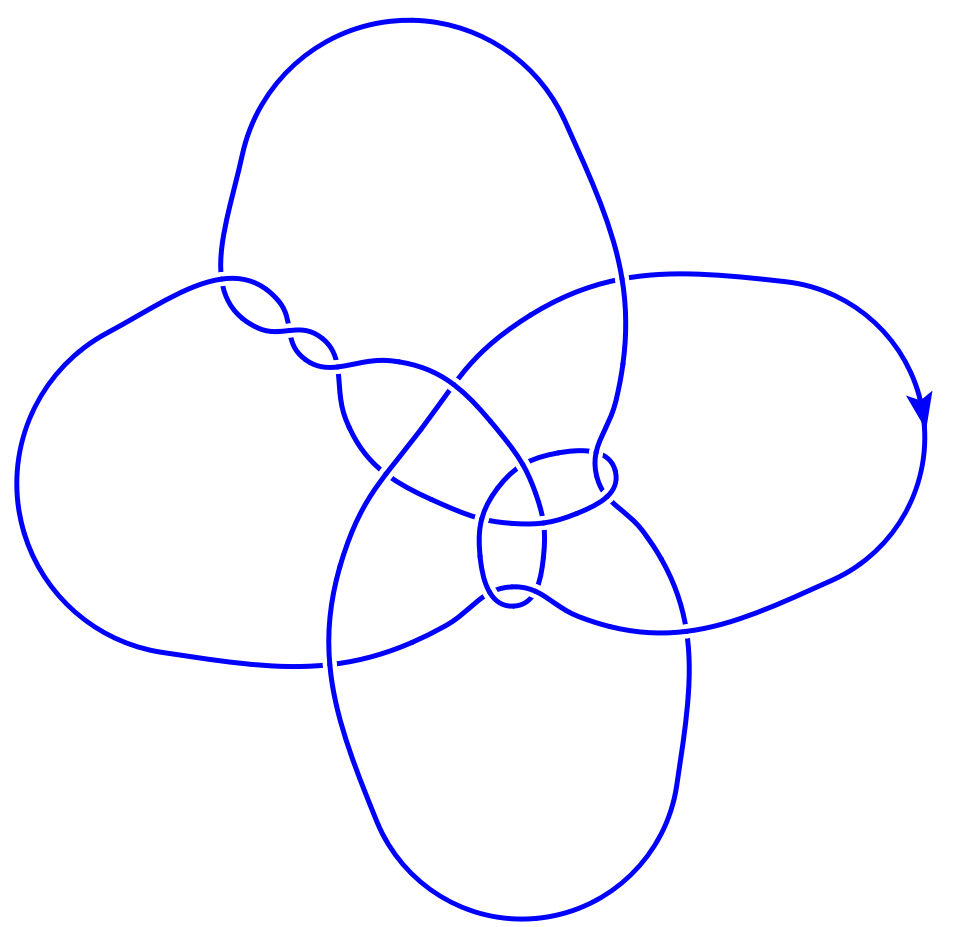}
 	\includegraphics[width=0.49\textwidth]{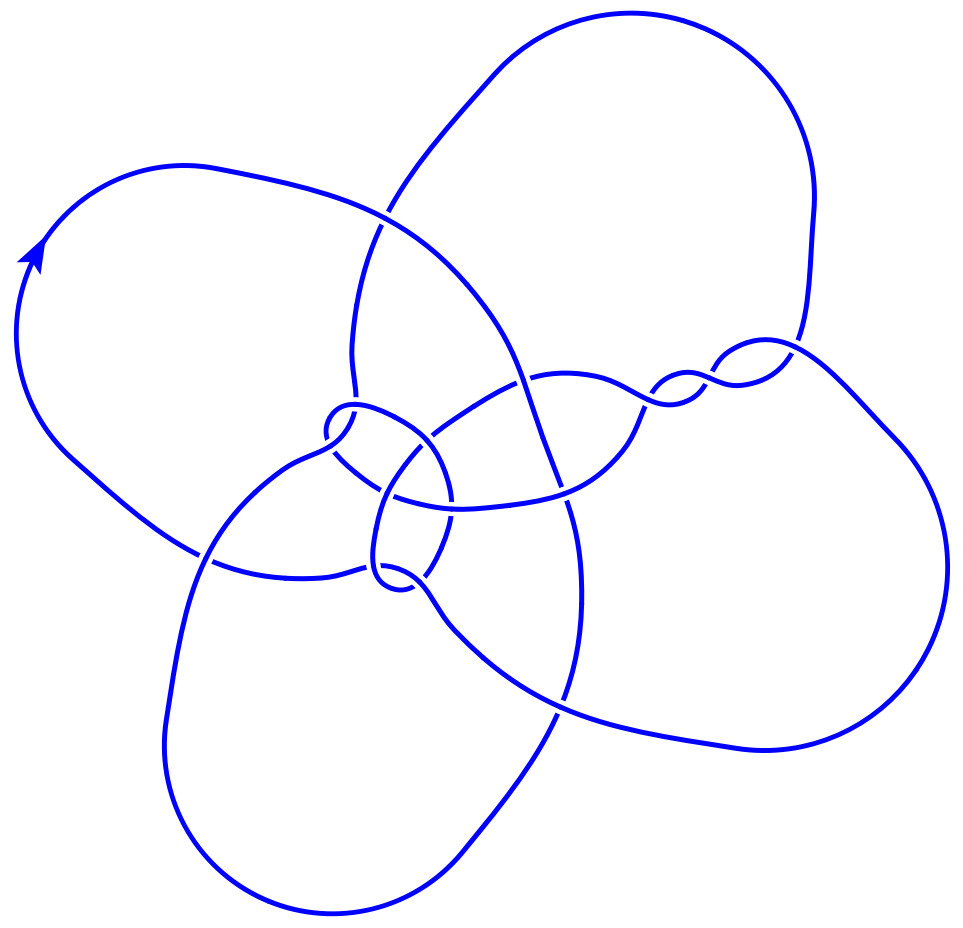}
 	\caption{Diagrams of $K15n142188$ (left) and $K15n156076$ (right) showing their satellite structures on $K3a1$. Both diagrams were created with KLO~\cite{KLO}.}
 	\label{fig:knots}
 \end{figure}

\section{Mirrors}

In this section, we study when mirrors can be friends. First, we describe an infinite family of knots that are friends with their mirrors.

\begin{prop}\label{prop:mirror_family}
    There exists an infinite family of mutually distinct knots $(K_m)_{m\in\N}$ such that for every $m\geq1$,
    \begin{itemize}
        \item The mirror $-K_m$ of $K_m$ is isotopic to $K_{-m}$,
        \item $K_m$ is not isotopic to its mirror $-K_m$, but
        \item the traces $X(K_m)$ and $X(-K_m)$ are orientation-preservingly diffeomorphic.
    \end{itemize}
\end{prop}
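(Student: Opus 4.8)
The goal is to exhibit an infinite family of knots $K_m$ that are not amphichiral (not isotopic to their mirrors) but whose traces $X(K_m)$ and $X(-K_m)$ are nonetheless orientation-preservingly diffeomorphic. My plan is to produce such a family explicitly via a handle-slide / Kirby calculus construction rather than to search for it, since we need an \emph{infinite} family with a uniform reason for the trace equivalence.

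\begin{proof}[Proof sketch]
The plan is to build the family from a single symmetric link picture. The key mechanism producing equivalent traces of $K$ and $-K$ is that the trace $X(K)$ is a $4$-manifold built from $D^4$ by attaching one $0$-framed $2$-handle along $K$, and two such $2$-handle attachments can yield diffeomorphic $4$-manifolds even when the attaching knots are different (this is exactly the phenomenon exploited in the $4$-dimensional friends literature, cf.~\cite{Miller_Piccirillo_traces,Manolescu_Piccirillo_0_surgery}). Concretely, I would start from a two-component link $L$ that admits an isometry (symmetry) swapping its components, where one component is viewed as the attaching region of the $0$-framed handle and the other records the surgery/blow-up data. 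By choosing $L$ so that performing a sequence of blow-ups and handle-slides on the diagram of $K_m$ converts it into a diagram of $-K_m$ while preserving the $0$-framed $2$-handle up to the required handle moves, one gets an explicit orientation-preserving diffeomorphism $X(K_m)\cong X(-K_m)$ for every $m$.

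The first step is to write down the family. A natural candidate is a twist family: take a fixed tangle/link and insert $m$ full twists in a chosen band or annulus, producing knots $K_m$ indexed by $m\in\Z$, designed so that reversing the sign of the twisting parameter is realized by mirroring, i.e.\ $-K_m$ is isotopic to $K_{-m}$. This immediately gives the first bullet point by construction. The second step is to verify non-amphichirality, $K_m\not\simeq -K_m$ for $m\ge 1$: I would detect this with a chirality-sensitive invariant, most cleanly the signature $\sigma(K_m)$ (which changes sign under mirroring and can be forced to be nonzero and to grow with $m$), or alternatively the (Jones or HOMFLY) polynomial, whose behavior under mirroring is well understood. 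Arranging $\sigma(K_m)\neq 0$ for all $m\ge1$ simultaneously distinguishes each $K_m$ from its mirror and gives mutual distinctness of the $K_m$ as $m$ varies, completing the first two bullets.

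The third and central step is the trace equivalence $X(K_m)\cong X(-K_m)$. Here I would exhibit the diffeomorphism by Kirby calculus on the two-handlebody diagrams: present $X(K_m)$ as $D^4$ plus a $0$-framed handle along $K_m$, introduce a cancelling $\pm1$-framed meridional $2$-handle / $1$-handle pair if needed, and use the component-swapping symmetry of the underlying link $L$ together with blow-downs to transform the picture into the standard diagram of $X(-K_m)$. The output diffeomorphism must be checked to be orientation-preserving, which is automatic if every Kirby move used (handle slides, cancellation of canceling pairs, isotopy) is orientation-preserving, so I would avoid any orientation-reversing blow-down by tracking framings carefully.

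\textbf{Main obstacle.} The hard part will be the trace equivalence, not the non-amphichirality. Producing diffeomorphic \emph{traces} is strictly stronger than producing the same $0$-surgery, since the trace remembers the $4$-dimensional handle structure; a handle-slide argument that merely matches the boundary $0$-surgeries is not enough. The delicate point is to find a single symmetric link $L$ whose component-swap, when translated into Kirby moves, simultaneously (i) realizes the mirroring of the attaching knot and (ii) leaves the $0$-framed $2$-handle intact up to slides, for \emph{all} $m$ in a uniform way. Guaranteeing that this works for every $m$ (rather than for sporadic values found by computer search) is what forces the twist-family structure and is the crux of the argument; I would expect to verify the base case diagrammatically and then observe that inserting additional full twists commutes with the required sequence of Kirby moves.
\end{proof}
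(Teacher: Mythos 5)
Your overall architecture (a symmetric RGB-type link whose component swap realizes mirroring, with a twist parameter $m$ producing the family and the trace equivalence built into the handle cancellation) is exactly the mechanism the paper uses, and your identification of $-K_m\simeq K_{-m}$ as coming from an amphichiral symmetric link is on target. However, there is a genuine gap in your plan for the second bullet, and it is not a repairable detail: you propose to certify $K_m\not\simeq -K_m$ (and mutual distinctness) by arranging $\sigma(K_m)\neq 0$. This is impossible for any family satisfying the conclusion of the proposition. If $X(K_m)$ and $X(-K_m)$ are diffeomorphic, then $K_m$ and $-K_m$ share the same $0$-surgery, hence the same signature; but $\sigma(-K_m)=-\sigma(K_m)$, so $\sigma(K_m)=0$ for every $m$. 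The same vanishing kills your use of the signature for distinguishing the $K_m$ from one another. More generally, no invariant determined by the $0$-surgery or the trace can detect the chirality you need, so the detection tool must live strictly on the knot-complement side. The paper handles this with hyperbolic geometry: mutual distinctness follows from the monotone convergence of $\operatorname{vol}(K_m)$ to the volume of the surgery-description link $L$, and chirality follows from the fact that $S^3\setminus L$ admits no nontrivial orientation-preserving isometry extending to the link, combined with Thurston's hyperbolic Dehn filling theorem~\cite{Thurston} (in the quantified form of~\cite{Futer_Purcell_Schleimer_bound}, cf.~\cite{BKMb}) to transfer asymmetry to all $K_m$ with $m$ beyond an explicit bound, plus direct symmetry-group computations for the finitely many remaining values. (A non-surgery invariant such as the Jones polynomial can also certify chirality, but only for finitely many $m$ at a time, so it cannot carry the whole infinite family.)

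A secondary point: you locate the ``main obstacle'' in the trace equivalence, but in the RGB framework that part is essentially free — dotting $R$ and cancelling it against either $G$ or $B$ produces two knots with the same trace by construction, and the $G\leftrightarrow B$ involution identifies the two outputs as $K_m$ and (after checking the link is amphichiral) $-K_{m}$ up to reindexing. The real work in the proposition is precisely the step your proposal gets wrong, namely proving that the $K_m$ are chiral and pairwise distinct; your instinct that ``inserting additional full twists commutes with the required Kirby moves'' is correct, but it gives you the easy bullet, not the hard ones.
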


\begin{proof}
    We will describe the infinite family via an RBG link construction as for example in~\cite{Miller_Piccirillo_traces,Piccirillo_shake_genus,Piccirillo_Conway_knot,Manolescu_Piccirillo_0_surgery,concordant_traces}. For that we consider the $3$-component hyperbolic link $RBG$ with components $R$, $G$, and $B$ in Figure~\ref{fig:strongly_invertible_RBG}(i). By rotating that diagram around the $z$-axis, we see that there exists an involution $f\colon S^3\setminus RBG\rightarrow S^3\setminus RBG$, that maps $R$ to itself but interchanges $G$ and $B$. 

    Now we see $RBG$ as a Kirby diagram by dotting $R$ and adding framings $-m-1$ and $m-1$ to $G$ and $B$, respectively. Note that $R$ is a meridian of both $G$ and $B$ (but not simultaneously). Thus we can cancel $R$ and $G$ and denote the image of $B$ under this cancellation by $K_B^{-m}$. By symmetry, we can also reverse the roles of $B$ and $G$ and get a knot $K_G^{m}$. By construction, these knots share the same trace. From the above symmetry of the RBG link, it follows that $K_B^m$ is isotopic to $K_G^m$ and we define this knot to be $K_m$. Thus we have constructed knots $K_m$ such that $K_m$ and $K_{-m}$ share the same trace. Figure~\ref{fig:strongly_invertible_RBG}(ii)-(iii) show explicit Kirby moves to create out of the RBG link the knots $K_m$. 

    Next we show that $-K_m=K_{-m}$ and that the $K_m$ are mutually distinct. Since the $K_m$ are obtained from $K_0$ by twisting three strands $m$ times, it follows from~\cite{twisting} that infinitely many of the $K_m$ are distinct. More concretely, we can see that they are all mutually distinct. For that, we consider the surgery description of the $K_m$ along the $2$-component link $L$ shown in Figure~\ref{fig:strongly_invertible_RBG}(iv). With SnapPy we check that $L$ is a hyperbolic link. By computing the volumes of the $K_m$ for small values of $m$ and using~\cite{volumes} we conclude that the volumes of the $K_m$ converge monotonically to the volume of $L$ and in particular all $K_m$ (for $m\in\N$) have different volumes. 

    Moreover, we use SnapPy to compute the symmetry group of $L$, from which we observe that $L$ is isotopic to its mirror. This implies that $K_{-m}$ is isotopic to the mirror $-K_m$ of $K_m$.

    It remains to demonstrate the each of the $K_m$ is not isotopic to its mirror. This can also be deduced from the symmetry group of $L$. Using SnapPy, we check that the complement of $L$ admits no non-trivial orientation-preserving isometry that extends to the link. Thus Thursthon's hyperbolic Dehn filling theorem~\cite{Thurston} implies that any sufficiently large Dehn filling of $L$ yields manifolds without any isometry, see~\cite{BKMb} for details. The $K_m$ arise by Dehn filling one component of $L$ with slope $1/m$. Since the length of $1/m$ converges monotonically to $\infty$ it follows that infinitely many of the $K_m$ are asymmetric and thus not isotopic to their mirrors. More concretely, we can show that in fact for all $m\in\N$ the knots $K_m$ are not isotopic to their mirrors. For that we use the quantification of Thurston's hyperbolic Dehn filling theorem~\cite{Futer_Purcell_Schleimer_bound} (see for example~\cite{BKMb}) to compute an explicit bound $B$ such that $K_m$ is asymmetric for all $m\geq B$. For the finitely remaining $m$, we use SnapPy to create triangulations of the complements of $K_m$ and check that their symmetry groups do not contain any orientation-reversing diffeomorphisms. (But $K_1$ and $K_2$ turn out to be not asymmetric.)
\end{proof}

\begin{figure}[htbp] 
	\centering
	\def\svgwidth{0,87\columnwidth}
	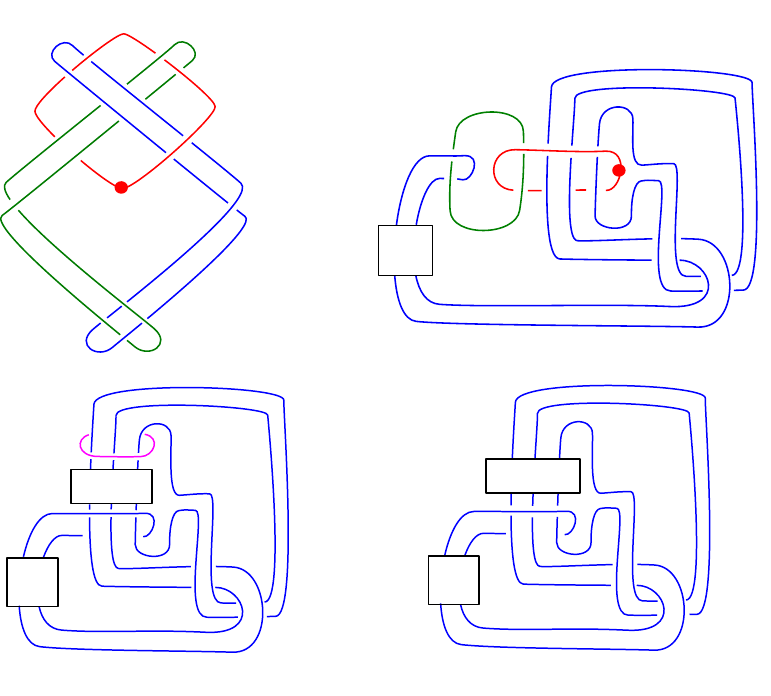
	\caption{A symmetric RBG link yielding knots $K_m$ that are friends with their mirrors}
	\label{fig:strongly_invertible_RBG}
\end{figure}

\begin{rem}\label{rem:mirrors_examples}
    With Snappy we can check that 
    \begin{itemize}
        \item $K_{\pm1}=\pm K13n469=\pm v2272$,
        \item $K_{\pm2}=\pm t11462$, and
        \item $K_{\pm 3}= \pm o9\_41058$.
    \end{itemize}
    Thus each of these knots is a $4$-dimensional friend with its mirror.
\end{rem}

Next, we analyze which low-crossing and census knots are friends with their mirrors.

\begin{prop}\label{prop:mirror}
Let $K$ be a prime knot with at most $15$ crossings or a hyperbolic knot with tetrahedral complexity at most $9$. Then $K$ and $-K$ have orientation-preserving diffeomorphic $0$-surgeries if and only if $K$ is amphicheiral or if $K$ is isotopic to one of the following six chiral knots: $K13n469=v2272$, $K14n3411$, $K15n64176$, $t11462$,  $o9\_22951$, and $o9\_41058$.
\end{prop}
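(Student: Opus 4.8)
The plan is to reduce the statement about $0$-surgeries to a question about a single manifold. Observe that the $0$-surgery of the mirror $-K$ is exactly the $0$-surgery $K(0)$ with reversed orientation, since mirroring reverses the orientation of $S^3$ while leaving the $0$-framing unchanged. Hence $K$ and $-K$ have orientation-preserving diffeomorphic $0$-surgeries if and only if $K(0)$ admits an orientation-reversing self-diffeomorphism, i.e.\ $K(0)$ is amphichiral as an oriented $3$-manifold. If $K$ is amphicheiral, then $K$ is isotopic to $-K$ and the two $0$-surgeries agree on the nose, which disposes of that clause immediately. The remaining task is therefore to decide, for every \emph{chiral} knot $K$ in the stated range, whether $K(0)$ is amphichiral.

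For the ``if'' direction it remains to treat the six chiral exceptions. Three of them, namely $K13n469$, $t11462$, and $o9\_41058$, are the knots $K_{\pm1}$, $K_{\pm2}$, and $K_{\pm3}$ of the family built in Proposition~\ref{prop:mirror_family} (see Remark~\ref{rem:mirrors_examples}); by construction these are even $4$-dimensional friends with their mirrors, so in particular their $0$-surgeries are amphichiral. For the remaining knots $K14n3411$, $K15n64176$, and $o9\_22951$ I would verify amphichirality of $K(0)$ directly: these $0$-surgeries are hyperbolic (they do not occur in Tables~\ref{tab:non_hyp_surgeries} and~\ref{tab:non_hyp_surgeries2}, so Theorem~\ref{thm:classification} applies), and by Mostow rigidity it suffices to exhibit an orientation-reversing isometry, which SnapPy reads off from the symmetry group. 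For the closed surgeries one extracts the orientation behaviour using the drilling trick described in Section~\ref{sec:low}, drilling a shortest geodesic and inspecting the induced action on the resulting cusp.

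The ``only if'' direction is the core of the argument: for every chiral knot $K$ in the range that is \emph{not} one of the six, I must show that $K(0)$ carries no orientation-reversing self-diffeomorphism. The essential subtlety is that this cannot simply be harvested from the friend census of Theorem~\ref{thm:census}, because the grouping by Alexander polynomial in Section~\ref{sec:low} discards any knot that is alone in its group, and such a knot may nonetheless be a friend of its own (separately untabulated) mirror; so the check must be run independently over \emph{all} in-scope knots. When $K(0)$ is hyperbolic I again compute its isometry group with SnapPy (legitimate by Mostow rigidity) and confirm that it contains no orientation-reversing element, again using the drilling trick to pin down orientations of the closed surgeries. When $K(0)$ is non-hyperbolic it is one of the manifolds listed in Tables~\ref{tab:non_hyp_surgeries} and~\ref{tab:non_hyp_surgeries2}, and I would decide amphichirality from its Seifert-fibered or graph-manifold structure, since any orientation-reversing symmetry must respect the (rigid) JSJ decomposition and the Seifert invariants of each piece. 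I expect the main obstacles to be completeness and orientation bookkeeping rather than a single hard idea: certifying the symmetry-group computation across a very large collection of hyperbolic $0$-surgeries, reliably recovering the \emph{orientation} of each isometry of the closed manifolds via drilling, and arguing by hand that none of the finitely many non-hyperbolic graph-manifold surgeries admits an orientation-reversing self-homeomorphism while making sure that no chiral knot slips through the Alexander-polynomial filter.
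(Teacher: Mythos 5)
Your overall reduction is correct and is essentially the route the paper takes: $(-K)(0)$ is $K(0)$ with reversed orientation, so the question is whether $K(0)$ admits an orientation-reversing self-diffeomorphism; hyperbolic $0$-surgeries are handled by Mostow rigidity and verified symmetry-group computations, non-hyperbolic ones through their JSJ decompositions, and you are right that this must be run over all in-scope knots rather than read off from the Alexander-polynomial-filtered census. One organizational difference: the paper first applies the filter $\sigma(K)=-\sigma(-K)$, so only knots with vanishing signature need to be examined ($80015$ of the low-crossing knots, $139$ census knots); without some such filter your plan requires symmetry-group computations for the $0$-surgeries of every chiral knot in the range, which is a vastly larger (though still finite) computation.

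The genuine gap is in your treatment of the non-hyperbolic positive cases, and concretely of $o9\_22951$. First, its $0$-surgery is \emph{not} hyperbolic: Theorem~\ref{thm:classification} and Tables~\ref{tab:non_hyp_surgeries}--\ref{tab:non_hyp_surgeries2} only cover prime knots with at most $15$ crossings, so absence from those tables says nothing about a $9$-tetrahedron census knot; in fact $o9\_22951(0)$ is a graph manifold with JSJ pieces $m004$ and $SFS\,[D^2\colon(3,1)(3,-1)]$ (see Table~\ref{tab:symmetry_groups}). Second, and more structurally, the JSJ/Seifert-invariant analysis you propose for non-hyperbolic $0$-surgeries only ever yields an \emph{obstruction} to amphichirality: if every JSJ piece individually admits an orientation-reversing diffeomorphism, you still cannot conclude that the glued manifold does, because the recognition code does not return the gluing maps. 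This is exactly the situation for $K13n469(0)$ and $o9\_22951(0)$. The paper resolves it by exhibiting explicit orientation-reversing $0$-surgery diffeomorphisms: the symmetric RGB-link family of Proposition~\ref{prop:mirror_family} and Remark~\ref{rem:mirrors_examples} for $K13n469$, $t11462$, and $o9\_41058$, and the Piccirillo-friend construction of Theorem~\ref{thm:Piccirillo_duals} for $o9\_22951$. You cover $K13n469$ this way but leave $o9\_22951$ resting on a hyperbolicity claim that is false, so that case of the ``if'' direction is not established by your argument. You should also state how you would decide, not merely obstruct, amphichirality for any further graph-manifold $0$-surgery whose pieces all pass the orientation-reversal test, since your plan as written has no mechanism for that.
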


\begin{proof}
If $K$ and $K'$ have orientation-preserving diffeomorphic $0$-surgeries then their signatures agree $\sigma(K)=\sigma(K')$. For any knot $K$ we have $\sigma(K)=-\sigma(-K)$. Thus if $K$ and $-K$ have orientation-preserving diffeomorphic $0$-surgeries, then the signature of $K$ has to vanish.

We compute the signatures of all prime knots with at most $15$ crossings. Whenever the signature is non-vanishing then $K$ and $-K$ are not friends. The signature is vanishing for exactly $80015$ knots. We use SnapPy to check that $353$ of the remaining knots are actually isotopic to their mirrors. Among the remaining $79662$ knots exactly $44$ have non-hyperbolic $0$-surgeries. If any of the latter admits an orientation-reversing diffeomorphism, this diffeomorphism is isotopic to one that preserves the JSJ decomposition. In particular, there exist orientation-reversing diffeomorphisms of all JSJ pieces. If a piece of the JSJ decomposition is a Seifert fibered space, then we can use the classification of Seifert fibered spaces that admit orientation-reversing diffeomorphisms, see for example Section 8 in~\cite{Neumann_Raymond_SFS}. Thus we can read off from the Regina names in Table~\ref{tab:non_hyp_surgeries} directly if a Seifert fibered JSJ piece has an orientation-reversing diffeomorphism. If a JSJ decomposition admits a hyperbolic piece then we can use the symmetry group calculations in SnapPy of that piece to check if it admits an orientation-reversing diffeomorphism. By sorting out the $0$-surgeries that have a JSJ decomposition with at least one Seifert fibered or hyperbolic piece without orientation-reversing diffeomorphism, we are left only with the knot $K13n469$ whose $0$-surgery has the JSJ decomposition consisting of the figure eight knot complement $m004$ and the Seifert fibered space $SFS [D: (2,1) (2,-1)]$. Both spaces admit orientation-reversing diffeomorphisms. But since Dunfield's Regina code does not return the gluing map, we do not directly know if that space admits an orientation-reversing diffeomorphism. But by Remark~\ref{rem:mirrors_examples}, we know that $K13n469$ is actually the first knot in a family of infinitely many knots that are friends with their mirrors.

It remains to handle the chiral knots with vanishing signature and hyperbolic $0$-surgeries. For these, we use SnapPy to compute the symmetry groups of the $0$-surgeries from which we can read off if the $0$-surgeries admit an orientation-reversing diffeomorphism. It turns out that $K14n3411$ and $K15n64176$ are the only chiral knots that have amphicheiral $0$-surgeries.

The same strategy works also for the census knots. Of the $1267$ census knots exactly $139$ have vanishing signatures. Among these, $5$ are isotopic to their mirrors. Of the remaining knots, $56$ have non-hyperbolic $0$-surgery and $78$ have hyperbolic $0$-surgery. Of the non-hyperbolic $0$-surgeries we can exclude as above all but two ($v2272$ and $o9\_22951$) to admit an orientation-reversing diffeomorphism. The knot $v2272$ is in fact $K13n469$ and thus is a friend of its mirror by Remark~\ref{rem:mirrors_examples}. The $0$-surgery of $o9\_22951$ has JSJ decomposition consisting of the figure eight knot complement $m004$ and the Seifert fibered space $SFS [D: (3,1) (3,-1)]$. In Theorem~\ref{thm:Piccirillo_duals} below we show that $o9\_22951$ is a friend of its mirror by constructing an RBG link for that pair.

For the knots with hyperbolic $0$-surgeries, we use SnapPy to compute their symmetry groups and deduce that only two admit an orientation-reversing diffeomorphism. The knots $t11462$ and $o9\_22951$ both have hyperbolic $0$-surgeries that admit orientation-reversing diffeomorphisms. Thus they are friends with their mirrors. 

In total, we have found $6$ chiral knots among the low-crossing and census knots that are friends with their mirrors. (To verify that these knots are really not isotopic to their mirrors we compute their Jones polynomials.)

We also remark that the knots $K14n3411$, $K15n64176$, $t11462$, and $o9\_22951$ are chiral but have amphicheiral $0$-surgeries. It follows that these knots necessarily have $0$ as a symmetry-exceptional slope~\cite{BKMb}, cf.\ Proposition~\ref{prop:ex_sym}, since the symmetry groups of their $0$-surgeries are larger than the symmetry groups of the knots. Indeed, we have $\operatorname{Sym}(t11462)=\Z_2$ and $\operatorname{Sym}(t11462(0))=D_4$. For the other three knots, the symmetry groups are trivial, while their $0$-surgeries have symmetry group $\Z_2$.
\end{proof}

\section{Burton's and Thistlethwaite's lists}

To prove Theorem~\ref{thm:cc} and for the creation of the census from Theorem~\ref{thm:census} we need to show that some low crossing knots do not have friends of larger crossing numbers. More precisely, let $K$ be a prime knot with crossing number $c(K)$ at most $9$, we need to show that there exists no prime knot $K'$ with crossing number $c(K')\leq 25-c(K)$ such that $K$ and $K'$ have diffeomorphic $0$-surgeries. 

For that we first remark that the unknot~\cite{KMOS_char_unknot}, the trefoil $K3a1$~\cite{Ozvath_Szabo_trefoil_figeight}, the figure-eight knot $K4a1$~\cite{Ozvath_Szabo_trefoil_figeight}, and $K5a1$~\cite{Baldwin_Sivek_52} do not have any friends. So the first knot, that might have a friend is the $(5,2)$-torus knot $K5a2$. (However, it is conjectured that it has no friend at all.)

So to finish the proof of Theorem~\ref{thm:cc} we need to check that $K5a2$ has no friend among the prime knots with at most $20$ crossings, that no prime knot with $6$ crossings has a friend among the prime knots with at most $19$ crossings, and so on.

For that we run through Burton's lists of prime knots of at most $19$ crossings~\cite{Burton} and Thistlethwaite's list of prime knots with $20$ crossings~\cite{Thistlethwaite}, compute their Alexander polynomials and if one of the Alexander polynomials turns out to be the same as the Alexander polynomial of a knot with low-crossing number, we distinguish the $0$-surgeries by using the hyperbolicity status, the volume, the Regina names, or the fundamental groups as in Section~\ref{sec:low}. If these invariants all agree, we use SnapPy and Regina to search for a diffeomorphism between their $0$-surgeries. This successfully determines for all possible pairs of knots if they are friends or not.

\section{Census knots}
The same strategy as for the low-crossing knots works also for the census knots. But here everything works much faster since there are only $1267$ census knots whose complements can be triangulated by at most $9$ tetrahedra. Moreover, the classification of the exceptional fillings on the census knots was already done by Dunfield~\cite{Dunfield_census}, and many of its other invariants were already computed in~\cite{ABG+19,Baker_Kegel_braid_positive,CensusKnotInfo}. 
With the same approach, we also classify all census knots that have a friend among the knots with at most $15$ crossings.

This finishes the creation of our census from Theorem~\ref{thm:census} and thus also the proofs of Theorems~\ref{thm:cc} and~\ref{thm:tt}. 

\section{Smooth 4-genus}

It remains to perform the $4$-genus calculations in Theorem~\ref{thm:census}. For the knots with at most $12$ crossings and for the census knots we can read off the data from existing databases~\cite{KnotInfo,ABG+19,Baker_Kegel_braid_positive,CensusKnotInfo}. For the other knots, we search for a ribbon description using the code from~\cite{Ribbon_ML}. If this code could not identify a knot as being ribbon, we checked the standard sliceness obstructions, such as the signature, the Fox--Milnor condition, the $\tau$- and $s$-invariant, and the obstruction from~\cite{HKL_slice_obstruction}. This was enough to determine the sliceness status of all but the two knots ($o9\_22951$ and $o9\_41058$) in Table~\ref{tab:doubles}. The two knots with undetermined sliceness status are friends of their mirrors and so they have the same $4$-genus.

To show that all non-slice knots and the two knots with undetermined sliceness status in that table have smooth $4$-genus at most $1$, we search for a genus-$1$ concordance to a ribbon knot. This is done by searching for a crossing change that yields a knot that is identified by~\cite{Ribbon_ML} to be ribbon.

\section{Extending diffeomorphisms over traces and symmetry-exceptional slopes}

In the remaining part of this article, we ask which of the friends from Table~\ref{tab:doubles} are $4$-dimensional friends, i.e.\ share the same trace. In this section, we explain general strategies to study the question if two friends are $4$-dimensional friends.

To show in practice that two friends are $4$-dimensional friends, we describe a $0$-surgery diffeomorphism $\varphi\colon K(0)\rightarrow K'(0)$ via explicit Kirby moves and then analyze the image of the meridian $\mu_K$ of $K$ under $\varphi$. This can reveal that $\varphi$ extends to a diffeomorphism of the $0$-traces, see for example~\cite{Akbulut,AJOT_annulus_twisting_2013,Manolescu_Piccirillo_0_surgery}.

It is often harder to show that two friends are not $4$-dimensional friends. We first recall a result of Boyer~\cite{Boyer}.
Let $K$ and $K'$ be friends and let $\varphi\colon K(0)\rightarrow K'(0)$ be a diffeomorphism. We define the \textbf{parity} of $\varphi$ to be the parity of the intersection form of the closed $4$-manifold
\begin{equation*}
    X(K') \cup_\varphi -X(K).
\end{equation*}

 \begin{thm} [Boyer~\cite{Boyer}, cf.~Theorem 3.7 in~\cite{Manolescu_Piccirillo_0_surgery}] \label{thm:Boyer}
     $\varphi$ extends to a homeomorphism $\Phi\colon X(K)\rightarrow X(K')$ of the traces if and only if $\varphi$ is even.
 \end{thm}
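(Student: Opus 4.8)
The plan is to recast the whole extension problem in terms of the closed $4$-manifold $Z=X(K')\cup_\varphi -X(K)$ and its intersection form. First I would record the basic topology of $Z$: it is closed and oriented, it is simply connected by van Kampen (both traces are simply connected, so the amalgamated fundamental group is trivial regardless of $\pi_1(K(0))$), and by Novikov additivity $\sigma(Z)=\sigma(X(K'))-\sigma(X(K))=0$ because each $0$-trace carries the degenerate rank-one form $[0]$. A Mayer--Vietoris computation over the splitting along $K(0)$ gives $b_2(Z)=2$, so $Q_Z$ is a unimodular symmetric form of rank $2$ and signature $0$; hence $Q_Z$ is isometric either to the hyperbolic form $\left(\begin{smallmatrix}0&1\\1&0\end{smallmatrix}\right)$ (even) or to $\langle 1\rangle\oplus\langle -1\rangle$ (odd). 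By Freedman's classification of simply connected topological $4$-manifolds, the even case is equivalent to $Z$ being homeomorphic to $S^2\times S^2$. Thus ``$\varphi$ is even'' is literally ``$Z\cong S^2\times S^2$''.

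For the necessity direction I would argue as follows. Suppose $\varphi$ extends to a homeomorphism $\Phi\colon X(K)\to X(K')$ with $\Phi|_{\partial}=\varphi$. Gluing the identity of $X(K')$ to $-\Phi\colon -X(K)\to -X(K')$ gives a homeomorphism $Z\cong X(K')\cup_{\mathrm{id}}(-X(K'))$, the double $DX(K')$; the two maps agree on the collar precisely because $\Phi|_\partial=\varphi$. It remains to check that the double of a $0$-trace is even. Here I would exhibit the geometric hyperbolic pair directly: the capped Seifert-surface class $\alpha$ has $\alpha\cdot\alpha=0$ (the $0$-framing), the doubled cocore of the $2$-handle is an embedded sphere $\beta$ with $\alpha\cdot\beta=1$, and $\beta\cdot\beta=0$ because the belt-sphere framing is again the $0$-framing (independent of $K'$). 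Hence $Q_{DX(K')}=\left(\begin{smallmatrix}0&1\\1&0\end{smallmatrix}\right)$, so $Q_Z$ is even and $\varphi$ is even. (I would avoid the tempting shortcut that $DX(K')$ is null-cobordant, hence metabolic, hence even — metabolic does \emph{not} imply even, as $\langle 1\rangle\oplus\langle-1\rangle$ itself is metabolic.)

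The substantive direction is sufficiency. Assume $Q_Z$ is even, so $Z\cong S^2\times S^2$. The geometric generator $\alpha$ again has $\alpha\cdot\alpha=0$, and evenness lets me adjust a dual class by a multiple of $\alpha$ to obtain $\beta$ with $\beta\cdot\beta=0$ and $\alpha\cdot\beta=1$, so $\{\alpha,\beta\}$ is a hyperbolic basis. The approach is to realize $\beta$ by a locally flat embedded $2$-sphere $S$ with trivial normal bundle via Freedman--Quinn topological surgery in the simply connected setting, to split $Z$ along $\partial\,\nu S\cong S^2\times S^1$, and to identify the two resulting pieces with $X(K)$ and $X(K')$ in a way compatible with the original decomposition along $K(0)$; tracking the boundary identification through this splitting produces the extension $\Phi$. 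Geometrically $\beta$ is assembled from the two cocores (the meridian disks), so its square records exactly the framing with which $\varphi$ carries the meridian $\mu$ of $K$ into $X(K')$ — which is why the parity criterion matches the meridian analysis used elsewhere in the paper. Equivalently one invokes Boyer's general extension theorem~\cite{Boyer}: once the forms are abstractly isometric (here both are $[0]$, trivially so), the sole remaining obstruction to extending a boundary homeomorphism over simply connected topological fillings is a single $\Z/2$ class, and the content of the theorem is that this class equals the parity of $\varphi$.

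The main obstacle is precisely this last step in the sufficiency direction: converting the algebraic data (even form, hyperbolic basis) into an honest locally flat homeomorphism restricting to $\varphi$. This is exactly where Freedman--Quinn disk-embedding technology is unavoidable — both to represent $\beta$ by an embedded sphere and to control the complement — and where one must verify that no secondary obstruction survives beyond the parity. Everything else (the homological bookkeeping, Novikov additivity, and the explicit even hyperbolic pair for necessity) is routine; the realization of homology classes by embedded spheres and the resulting splitting of $Z$ rel boundary is the delicate heart of the argument, and is what Boyer's analysis~\cite{Boyer}, cf.~\cite{Manolescu_Piccirillo_0_surgery}, supplies.
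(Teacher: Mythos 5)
The paper gives no argument for this statement at all---it is imported from Boyer~\cite{Boyer} (cf.~Theorem~3.7 of~\cite{Manolescu_Piccirillo_0_surgery})---so I am assessing your reconstruction on its own terms. Your framework is correct and is the standard one: $Z=X(K')\cup_\varphi -X(K)$ is closed, smooth, and simply connected with $b_2=2$ and $\sigma=0$, so $Q_Z$ is either hyperbolic or $\langle 1\rangle\oplus\langle-1\rangle$, and since $Z$ is smooth (hence has vanishing Kirby--Siebenmann invariant) Freedman's classification identifies the even case with $Z\cong S^2\times S^2$. Your necessity direction is complete and correct: the piecewise map assembled from $\mathrm{id}_{X(K')}$ and $\Phi$ is well defined precisely because $\Phi|_{\partial}=\varphi$, and the explicit hyperbolic pair (capped Seifert surface, doubled cocore) shows the double of a $0$-trace is even. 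The parenthetical warning that metabolic does not imply even is a good catch.

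The gap is in the sufficiency direction, and it is concrete. If $S\subset Z\cong S^2\times S^2$ is a locally flat sphere representing $\beta$ with $\beta^2=0$, then $\nu S\cong S^2\times D^2$ and both pieces of the splitting along $\partial\nu S\cong S^2\times S^1$ have boundary $S^2\times S^1$; neither can be identified with $X(K)$ or $X(K')$, whose boundaries are the $0$-surgery $K(0)$ (a manifold with $H_1\cong\Z$ but nontrivial $\pi_1$ beyond $\Z$ in general). The traces sit inside $Z$ glued along $K(0)$, not along $S^2\times S^1$, so no amount of tracking boundary identifications through this splitting will produce the desired extension $\Phi$; the sphere-splitting sketch would fail as stated. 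The correct route is the one in your final sentences: Boyer's theorem is a \emph{relative} extension and classification result for simply connected fillings of a fixed boundary, which identifies the sole obstruction to extending $\varphi$ (given the trivially compatible isometry of the rank-one forms $[0]$) with the parity of $Q_Z$; establishing that identification is the content of~\cite{Boyer} and is not recovered by surgering $Z$ itself. In short: you have proved the ``only if'' direction, and you have correctly reduced the ``if'' direction to the cited theorem, but the geometric mechanism you propose for it should be removed in favour of that citation.
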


We remark that there could exist $0$-surgery diffeomorphisms that extend to trace homeomorphisms but not to trace diffeomorphisms. Moreover, there exist examples of friends that have two different $0$-surgery homeomorphisms $\varphi,\varphi'\colon K(0)\rightarrow K'(0)$, such that $\varphi$ is odd and thus does not extend to a homeomorphism of the traces but such that $\varphi'$ is even and even extends to a trace diffeomorphism. We refer to Examples~\ref{ex:MP_ex_not4-d} and~\ref{ex:second_ex} for concrete examples.

On the other hand,~\cite {Manolescu_Piccirillo_0_surgery} describe situations in which every $0$-surgery diffeomorphism extends to a trace homeomorphism.

\begin{thm}[Manolescu--Piccirillo~\cite{Manolescu_Piccirillo_0_surgery}]\label{thm:Arf}
    If the Arf invariant $\operatorname{Arf}(K)$ of $K$ is non-vanishing, then any $0$-surgery diffeomorphism from $K(0)$ to any other $0$-surgery $K'(0)$ extends to a trace homeomorphism.
\end{thm}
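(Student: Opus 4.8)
The plan is to reduce the statement to Boyer's criterion recorded just above and then to detect the parity of an arbitrary $0$-surgery diffeomorphism $\varphi\colon K(0)\to K'(0)$ through spin structures. First I would record the shape of the glued manifold $W:=X(K')\cup_\varphi -X(K)$. Both traces are simply connected (each is $D^4$ with a single $2$-handle), carry the even intersection form $[0]$ on $H_2\cong\Z$, and have vanishing signature; hence $W$ is a closed, simply connected $4$-manifold with $b_2(W)=2$ and $\sigma(W)=0$. Its intersection form is therefore one of the two rank-$2$, signature-$0$ unimodular forms, namely the even hyperbolic form or $\langle1\rangle\oplus\langle-1\rangle$. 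Consequently $\varphi$ is even precisely when $W$ is spin, and by Boyer's theorem it then suffices to prove that $W$ is spin whenever $\operatorname{Arf}(K)\neq0$.

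Next I would translate spinness of $W$ into a statement about $0$-surgeries. Since $H^1(X(K);\Z/2)=0$, each trace admits a unique spin structure, and these restrict to distinguished spin structures $\mathfrak{s}_0$ on $K(0)$ and $\mathfrak{s}'_0$ on $K'(0)$. The manifold $W$ is spin if and only if the unique spin structures of $X(K')$ and $-X(K)$ agree along the gluing, i.e.\ if and only if $\varphi^*\mathfrak{s}'_0=\mathfrak{s}_0$ as spin structures on $K(0)$. Because $K(0)$ is a $\Z$-homology $S^1\times S^2$, it carries exactly two spin structures, so the task becomes to show that $\varphi$ cannot interchange them.

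The distinguishing tool is the Rokhlin invariant. For a spin structure $\mathfrak{s}$ on $K(0)$ I would set $\mu(K(0),\mathfrak{s}):=\sigma(Z)\bmod 16$, where $Z$ is any spin $4$-manifold bounding $(K(0),\mathfrak{s})$; such $Z$ exists since $\Omega^{\mathrm{spin}}_3=0$, and the value is independent of $Z$ by Novikov additivity together with Rokhlin's theorem (a closed spin $4$-manifold has signature divisible by $16$). The trace itself shows $\mu(K(0),\mathfrak{s}_0)=\sigma(X(K))=0$, and the classical relation between the Arf invariant of a knot and the Rokhlin invariant of its $0$-surgery (Robertello) gives $\mu(K(0),\mathfrak{s}_1)=8\operatorname{Arf}(K)$ for the other spin structure $\mathfrak{s}_1$. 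When $\operatorname{Arf}(K)\neq0$ the two values $0$ and $8$ are distinct modulo $16$, so $\mathfrak{s}_0$ is the unique spin structure on $K(0)$ with vanishing Rokhlin invariant. Since $\varphi$ is orientation-preserving it preserves $\mu$, and $\mathfrak{s}'_0$ has $\mu=0$ on $K'(0)$ by the same trace argument; hence $\varphi^*\mathfrak{s}'_0$ has $\mu=0$ on $K(0)$ and must equal $\mathfrak{s}_0$. Thus $W$ is spin, $\varphi$ is even, and Boyer's theorem extends $\varphi$ over the traces.

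The main obstacle is the input used in the last step: the identity $\mu(K(0),\mathfrak{s}_1)=8\operatorname{Arf}(K)$ relating the purely algebraic Arf invariant of the Seifert form to a signature-mod-$16$ Rokhlin invariant. I would establish it by pushing a Seifert surface of $K$ into $D^4$, building from it an explicit spin $4$-manifold bounding $K(0)$ with the non-trace spin structure, and computing its signature modulo $16$ in terms of the Arf invariant; the orientation bookkeeping in the gluing $X(K')\cup_\varphi -X(K)$ and the verification that the trace's spin structure restricts to the claimed $\mathfrak{s}_0$ are the only remaining points requiring care.
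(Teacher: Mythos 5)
The paper does not prove Theorem~\ref{thm:Arf} at all---it is quoted from~\cite{Manolescu_Piccirillo_0_surgery}---so the only meaningful comparison is with the argument in that reference, which your proposal essentially reproduces: reduce to Boyer's parity criterion, observe that the glued-up closed manifold is spin exactly when the diffeomorphism matches the two bounding spin structures on the $0$-surgeries, and distinguish the two spin structures on $K(0)$ by their Rokhlin invariants $0$ and $8\operatorname{Arf}(K)$ via Robertello/Gonz\'alez-Acu\~na. Your argument is correct (including the points you flag as needing care), so there is nothing to add beyond noting that the Arf--Rokhlin identity you defer is indeed the classical input used in the cited proof.
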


Note that the Arf invariant of a knot can be computed from its Alexander polynomial and since the Alexander polynomial of a knot is an invariant of its $0$-surgery, it follows that this condition is symmetric. We have used SnapPy to compute the Arf invariants of all pairs of friends from our census. The friends with non-vanishing Arf invariants are displayed in Table~\ref{tab:Arf}. By Theorem~\ref{thm:Arf} these friends have homeomorphic traces.

\begin{table}[htbp] 
	\caption{The friends with non-trivial Arf invariant}
	\label{tab:Arf}
\begin{tabular}{|ccc|}
\hline
$(K6a1, 19nh\_78)$ & $(K14n5084, o9\_37547)$ & $(t11900, o9\_40803)$\\
$(K11n38, v3093)$ & $(K15n94464, o9\_40081)$ & $(o9\_22951, o9\_22951)$\\
$(K13n2527, K15n9379)$ & $(v2869, t12388)$&\\
\hline
\end{tabular}
\end{table}

On the other hand, to show that $K$ and $K'$ have non-homeomorphic traces we need to check that \textit{all} $0$-surgery diffeomorphisms $\varphi\colon K(0)\rightarrow K'(0)$ are odd. 
In the following, we present a criterium to show that any $0$-surgery diffeomorphism of two friends is odd and thus the friends are not $4$-dimensional friends. 

First, we recall that if $K$ is a hyperbolic knot, then Thurston's hyperbolic Dehn filling theorem tells us that for all but finitely many slopes $r$, the Dehn filling $K(r)$ of $K$ with slope $r$ is again hyperbolic~\cite{Thurston}. (The finitely many slopes for which $K(r)$ is not hyperbolic are called \textit{exceptional}.) Moreover, if the slope $r$ is sufficiently long then the core of the newly glued-in solid torus is the shortest geodesic of $K(r)$. By Mostow rigidity, any diffeomorphism of a hyperbolic manifold is isotopic to a unique isometry. Since an isometry preserves the shortest geodesic, it follows that for sufficiently large $r$, any symmetry of $K(r)$ restricts to a symmetry of $K$. If there exists a symmetry of $K(r)$ that does not restrict to a symmetry of $K$ then $r$ is called \textit{symmetry-exceptional slope}. In particular, it follows that if $K(0)$ has trivial symmetry group, then $0$ is not a symmetry-exceptional slope. We refer to~\cite{BKMb} for further discussion. We also mention that there exist computational methods to check if a given slope is a symmetry-exceptional slope~\cite{BKMb}.

\begin{prop}\label{prop:ex_sym}
Let $K$ and $K'$ be friends. 
    If $K$ is hyperbolic and $0$ is not a symmetry-exceptional slope of $K$, then all $0$-surgery diffeomorphisms $K(0)\rightarrow K'(0)$ have the same parity.
\end{prop}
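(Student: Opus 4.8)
The plan is to show that any two $0$-surgery diffeomorphisms $\varphi_1,\varphi_2\colon K(0)\to K'(0)$ differ by a self-diffeomorphism of $K(0)$ that extends over the trace $X(K)$, and that composing with such a self-diffeomorphism leaves the parity unchanged. First note that the parity depends only on the isotopy class of the gluing map, since isotopic gluings produce diffeomorphic closed $4$-manifolds. Now write $\psi:=\varphi_2^{-1}\circ\varphi_1\colon K(0)\to K(0)$, an orientation-preserving self-diffeomorphism, so $\varphi_1=\varphi_2\circ\psi$. Suppose $\psi$ extends to a self-homeomorphism $\Psi\colon X(K)\to X(K)$. Then the identity on $X(K')$ together with $\Psi$ on $-X(K)$ assembles to an orientation-preserving homeomorphism
\[
X(K')\cup_{\varphi_2\circ\psi}\big({-}X(K)\big)\;\xrightarrow{\ \cong\ }\;X(K')\cup_{\varphi_2}\big({-}X(K)\big),
\]
because on the gluing region the two identifications differ precisely by $\partial\Psi=\psi$. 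Hence the two closed $4$-manifolds have isomorphic intersection forms, so $\varphi_1$ and $\varphi_2$ have the same parity. Thus the proposition reduces to the claim that \emph{every orientation-preserving self-diffeomorphism of $K(0)$ extends over the trace $X(K)$} (equivalently, by Boyer's theorem, is even).

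To prove this claim I would invoke the hypothesis that $0$ is not a symmetry-exceptional slope of the hyperbolic knot $K$. The content of this hypothesis is that every self-diffeomorphism of $K(0)$ is isotopic to one preserving the core $c$ of the surgery solid torus, and hence restricting to a self-diffeomorphism of the knot exterior $E_K=S^3\setminus\nu K=K(0)\setminus\nu c$; this is precisely what the computational criterion of~\cite{BKMb} verifies, and it is where hyperbolicity of $K$ enters, via Mostow rigidity. Such a self-diffeomorphism of $E_K$ necessarily preserves the meridian slope of $K$, as that is the unique slope whose Dehn filling is $S^3$ by~\cite{Gordon_Luecke}, and therefore extends to a symmetry $g$ of the pair $(S^3,K)$. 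Since $\psi$ is orientation-preserving on $K(0)$ and the filling torus is glued canonically, $g$ is orientation-preserving on $S^3$.

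Finally, I would observe that a symmetry $g$ of $(S^3,K)$ extends over the trace: the orientation-preserving map $g|_{S^3}$ extends over $D^4$, and since $g$ preserves $K$ together with its $0$-framing (the Seifert framing, which is determined by the linking-number-zero condition and so is preserved by any diffeomorphism of $S^3$), it extends over the $0$-framed $2$-handle to a self-diffeomorphism $\Psi\colon X(K)\to X(K)$ with $\partial\Psi$ isotopic to $\psi$. This establishes the claim and hence the proposition. I expect the main obstacle to be the translation step in the second paragraph: making precise that ``$0$ is not a symmetry-exceptional slope'' forces every self-diffeomorphism of $K(0)$ to be isotopic to one preserving the surgery core. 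For sufficiently long slopes this is immediate from Mostow rigidity (the core is the shortest geodesic), but for the slope $0$ — which need not be long, and along which $K(0)$ need not even be hyperbolic — this requires the verification in~\cite{BKMb}; by comparison, the orientation bookkeeping and the extension over the $2$-handle are routine.
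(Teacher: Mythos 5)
Your proof is correct and follows essentially the same route as the paper's: the hypothesis that $0$ is not symmetry-exceptional lets you (via Mostow rigidity) view the difference $\varphi_2^{-1}\circ\varphi_1$ as a symmetry of $K(0)$ that restricts to the knot exterior, Gordon--Luecke then forces it to preserve the $0$-framed meridian, and the parities agree. The only cosmetic difference is that you conclude by extending this symmetry over the trace and assembling a homeomorphism of the two closed $4$-manifolds, whereas the paper argues directly that $Z_{\varphi}$ and $Z_{\varphi'}$ are diffeomorphic because they are obtained by attaching handles along isotopic framed links.
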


\begin{rem}
    Opposed to Theorem~\ref{thm:Arf}, the condition in Proposition~\ref{prop:ex_sym} is not symmetric. There exist friends $K$ and $K'$ such that $0$ is a symmetry-exceptional slope for $K$ but not for $K'$, see for example Table~\ref{tab:symmetry_groups}.

    In Example~\ref{ex:MP_ex_not4-d} below we use Proposition~\ref{prop:ex_sym} to demonstrate that certain friends are not $4$-dimensional friends. 
\end{rem}

\begin{proof}[Proof of Proposition~\ref{prop:ex_sym}]
    Let $\varphi,\varphi'\colon K(0)\rightarrow K'(0)$ be two $0$-surgery diffeomorphisms. We write $f\colon K(0)\rightarrow K(0)$ for the unique diffeomorphism such that $\varphi'=\varphi\circ f$. By Mostow rigidity, $f$ is isotopic to a unique isometry and thus we can see $f$ as an element in the symmetry group of $K(0)$. Since we assume that $0$ is not a symmetry-exceptional slope of $K$, we conclude that $f$ restricts to an isometry of the complement $S^3\setminus K$ of $K$. By the solution of the knot complement problem~\cite{Gordon_Luecke}, such a symmetry sends a $0$-framed meridian $\mu_K$ of $K$ to itself. We conclude that $\varphi(\mu_K)$ is isotopic as framed knot to $\varphi'(\mu_K)$. Now we observe, that we obtain the closed $4$-manifold $Z_\varphi=X(K') \cup_\varphi -X(K)$ by attaching $2$-handles to $D^4$ along the $0$-framed $K'$ and the image of the $0$-framed meridian $\mu_K$ of $K$ under $\varphi$ and capping of with a $4$-handle. But since $\varphi(\mu_K)$ is isotopic to $\varphi'(\mu_K)$, the closed $4$-manifolds $Z_\varphi$ and $Z_{\varphi'}$ are diffeomorphic and thus $\varphi$ and $\varphi'$ have the same parity by Theorem~\ref{thm:Boyer}.
\end{proof}

For applying the obstruction from Proposition~\ref{prop:ex_sym} we classify the symmetry-excep\-tion\-al $0$-surgeries in our census.

\begin{prop}
    The classification of exceptional and symmetry-exceptional $0$-surgeries and their symmetry groups of the friends from Table~\ref{tab:doubles} is as displayed in Table~\ref{tab:symmetry_groups}.
\end{prop}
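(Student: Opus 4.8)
The plan is to verify the table entry by entry, separating the two independent pieces of data: whether $0$ is an exceptional slope (i.e.\ whether $K(0)$ fails to be hyperbolic) and, when $K(0)$ is hyperbolic, whether $0$ is a \emph{symmetry}-exceptional slope, together with the group $\operatorname{Sym}(K(0))$ itself. The exceptionality of $0$ is already recorded: for the low-crossing knots it follows from Theorem~\ref{thm:classification} (the knots listed in Tables~\ref{tab:non_hyp_surgeries} and~\ref{tab:non_hyp_surgeries2} are exactly those with non-hyperbolic $0$-surgery), and for the census knots from Dunfield's classification of exceptional fillings~\cite{Dunfield_census}. So the remaining work is the symmetry computation, carried out case by case.

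For each knot $K$ whose $0$-surgery is hyperbolic, I would first compute $\operatorname{Sym}(K(0))$ directly with the verified symmetry-group routine in SnapPy; this supplies the symmetry-group column. To decide whether $0$ is symmetry-exceptional I would use the characterization underlying Proposition~\ref{prop:ex_sym}: the slope $0$ is symmetry-exceptional precisely when some element of $\operatorname{Sym}(K(0))$ fails to preserve the core $c$ of the surgery solid torus, equivalently when the stabilizer of $c$ is a proper subgroup of $\operatorname{Sym}(K(0))$. As a first shortcut, if $\operatorname{Sym}(K(0))$ is trivial then $0$ is automatically not symmetry-exceptional. Otherwise I would drill $c$ out of $K(0)$ to recover $S^3\setminus K$, compute $\operatorname{Sym}(K)$, and read off from SnapPy's action on the cusp which symmetries of $K(0)$ restrict to the complement; the slope $0$ is symmetry-exceptional exactly when $\operatorname{Sym}(K(0))$ is strictly larger than this restricted subgroup. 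This is the method of~\cite{BKMb}, and it reproduces the phenomenon already encountered in the proof of Proposition~\ref{prop:mirror}, e.g.\ $\operatorname{Sym}(t11462)=\Z_2$ while $\operatorname{Sym}(t11462(0))=D_4$.

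For the knots with non-hyperbolic $0$-surgery, $\operatorname{Sym}(K(0))$ cannot be read from SnapPy and must instead be assembled from the JSJ decomposition recorded in Tables~\ref{tab:non_hyp_surgeries}--\ref{tab:non_hyp_surgeries2}. Here I would combine the classification of orientation-preserving self-diffeomorphisms of Seifert fibered spaces (as already used in the proof of Proposition~\ref{prop:mirror}, cf.~\cite{Neumann_Raymond_SFS}) with the verified symmetry groups of any hyperbolic JSJ pieces, checking compatibility across the gluing tori. The same analysis reveals whether a global symmetry can fail to preserve the surgery core, and hence whether $0$ is symmetry-exceptional in these cases.

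The main obstacle is twofold. First, Regina returns the JSJ pieces of the non-hyperbolic $0$-surgeries but not the gluing maps, so in those cases one must reconstruct enough of the gluing data to decide which piecewise symmetries actually glue to a global symmetry and whether any of them moves the core $c$; this must be handled individually for each manifold. Second, every symmetry-group and isometry computation has to be run through SnapPy's \emph{verified} routines (raising precision or re-triangulating where necessary), so that the \emph{negative} assertions — that a given $\operatorname{Sym}(K(0))$ contains no further elements, and that a particular symmetry genuinely restricts to $S^3\setminus K$ — are rigorous rather than merely numerical approximations.
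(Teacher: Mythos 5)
Your proposal follows essentially the same route as the paper: verified SnapPy computations of $\operatorname{Sym}(K(0))$, Dunfield's Regina recognition code to certify the exceptional (non-hyperbolic) fillings, and the drilling/core-stabilizer strategy of~\cite{BKMb} to decide whether $0$ is symmetry-exceptional, with the shortcut that a trivial $\operatorname{Sym}(K(0))$ immediately rules out symmetry-exceptionality. The one place you propose to do strictly more than the paper is the non-hyperbolic $0$-surgeries: the table only records a lower bound (``at least $\Z_2$,'' coming from a knot symmetry that extends over the filling), precisely because Regina does not return the JSJ gluing maps — the obstacle you correctly flag — so a full determination of $\operatorname{Sym}(K(0))$ there is not actually needed or claimed.
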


\begin{proof}
    We use the functions in SnapPy to compute the symmetry groups of the knots in our census and to verify the hyperbolicity of their $0$-surgeries. If the latter fails, we use Dunfield's Regina recognition code~\cite{Dunfield_census,FPS_code} to recognize the $0$-surgery as a non-hyperbolic manifold confirming that it is exceptional. For the hyperbolic $0$-surgeries we use the strategies explained in~\cite{BKMb} to compute their symmetry groups. This process is not always guaranteed to terminate, but for the knots in the census, it terminates quickly and yields verified results.
\end{proof}

\begin{table}[htbp]
	\caption{This table displays the symmetry groups of the knots from Table~\ref{tab:doubles} and the types and symmetry groups of the $0$-surgeries.}
	\label{tab:symmetry_groups}
 {\scriptsize
\begin{tabular}{|c|c||c|c||c|c|}
\hline
$K_1$ & $\operatorname{Sym}(K_1)$   & $K_2$ & $\operatorname{Sym}(K_2)$ & $K_1(0)=K_2(0)$ & $\operatorname{Sym}(K_1(0))$  \\  
\hline
\hline
$K6a1$ & $D_4$ &$19nh\_78$ & $0$ & hyperbolic & $D_4$  \\  
\hline

$K9n4$ & $\Z_2$  & $o9\_34801$ & $0$ & hyperbolic & $\Z_2^2$ \\
\hline

$K10n10$ & $0$  & $-t11532$ & $0$ & hyperbolic & $0$  \\  
\hline
$K10n10$ & $0$  &$o9\_43446$ & $0$ & hyperbolic & $0$\\ 
\hline
$K10n13$ & $\Z_2$ &  $o9\_34818$ & $0$ & $SFS \,[S^2: (2,1) (5,1) (10,-7)]$  & at least $\Z_2$ \\  	
\hline

$K11n38$ & $\Z_2$  &$v3093$ & $\Z_2$ & hyperbolic & $\Z_2^2$ \\  			
\hline
$K11n49$ & $\Z_2$ & $K15n103488$ & $\Z_2$ & hyperbolic & $\Z_2^2$ \\  
\hline
$K11n49$ & $\Z_2$  & $o9\_42735$ & $\Z_2$ & hyperbolic &  $\Z_2^2$\\
\hline
$K11n116$ & $0$  & $-t12607$ & $0$ & hyperbolic & $\Z_2$\\ 		
\hline	

$K12n121$ & $\Z_2$ & $-t11441$ & $\Z_2$  & $SFS \, [D^2: (2,1) (2,1)] \cup_m $ & at least $\Z_2$ \\ 

&&&& $SFS\, [D^2: (2,1) (3,1)], \,m = [ 5,1 | 4,1 ]$ &\\
\hline
$K12n200$ & $\Z_2$  &$t11748$ & $\Z_2$ & hyperbolic & $\Z_2^2$\\  					
\hline
$K12n309$ & $\Z_2$  &$-K14n14254$ & $\Z_2$ & hyperbolic &  $\Z_2$\\  					
\hline
$K12n318$ & $0$ &  $-o9\_39433$ & $0$ & hyperbolic & $0$ \\  	
\hline

$K13n469$ & $\Z_2$  &$-K13n469$ & $\Z_2$ & $JSJ\big(SFS \,[D^2: (2,1) (2,-1)]\cup m004\big)$ & at least $\Z_2$ \\  					
\hline
$K13n572$ & $\Z_2$ & $-K15n89587$ & $0$ & hyperbolic & $\Z_2^2$\\  					
\hline
$K13n1021$ & $\Z_2$ &  $K15n101402$ & $0$ & hyperbolic &$\Z_2^2$  \\
\hline
$K13n1021$ & $\Z_2$  & $-o9\_34908$ & $\Z_2$ & hyperbolic &$\Z_2^2$ \\ 
\hline
$K13n2527$ & $0$  &$-K15n9379$ & $\Z_2$ & hyperbolic & $\Z_2^2$\\  					
\hline
$K13n3158$ & $0$ & $o9\_41909$ & $0$ & hyperbolic & $\Z_2$ \\  	
\hline

$K14n3155$ & $\Z_2$  &$-K14n3155$ & $\Z_2$ & hyperbolic & $D_4$ \\  				
\hline
$K14n3155$ & $\Z_2$  &$t11462$ & $\Z_2$ & hyperbolic & $D_4$\\  				
\hline
$K14n3611$ & $\Z_2$  & $-o9\_27542$  &$\Z_2$ & $JSJ\big(M \Tilde\times S^1\cup  m015 \big)$ & at least $\Z_2$ \\ 				
\hline
$K14n5084$ & $\Z_2$  & $-o9\_37547$ & $\Z_2$ & hyperbolic & $\Z_2^2$\\ 				
\hline	

$K15n19499$ & $\Z_2$  &$K15n153789$& $0$ & $JSJ\big(SFS \,[D^2: (2,1) (2,-1)]\cup  m032\big)$ & at least $\Z_2$\\ 
\hline
$K15n19499$ & $\Z_2$  & $o9\_31828$ & $\Z_2$ & $JSJ\big(SFS \,[D^2: (2,1) (2,-1)]\cup  m032\big)$ & at least $\Z_2$ \\ 
\hline
$K15n94464$ & $0$  &$-o9\_40081$ & $\Z_2$ & hyperbolic & $\Z_2^2$ \\  	
\hline
$K15n101402$ & $0$  & $-o9\_34908$ & $\Z_2$ & hyperbolic & $\Z_2^2$ \\ 
\hline
$K15n103488$ & $\Z_2$ & $o9\_42735$ & $\Z_2$ & hyperbolic &  $\Z_2^2$\\  
\hline
$K15n153789$& $0$  &$o9\_31828$ & $\Z_2$ & $JSJ\big(SFS \,[D^2: (2,1) (2,-1)]\cup  m032\big)$ & at least $\Z_2$\\ 
\hline

$v0595$ & $\Z_2$ & $t12120$ & $\Z_2$ & $SFS \,[S^2: (2,1) (7,2) (14,-11)]$ & at least $\Z_2$ \\ 	
\hline
$v2869$ & $\Z_2$  & $t12388$ & $0$ & hyperbolic & $\Z_2$\\  	
\hline

$t07281$ & $\Z_2$  & $o9\_34949$ & $\Z_2$ & hyperbolic & $\Z_2^2$ \\  
\hline
$t07281$ & $\Z_2$  & $o9\_39806$ & $\Z_2$ & hyperbolic & $\Z_2^2$ \\
\hline
$-t10974$ & $\Z_2$ & $o9\_39967$ & $\Z_2$ & hyperbolic & $\Z_2$ \\  				
\hline	
$t11462$ &  $\Z_2$ & $-t11462$ & $\Z_2$ & hyperbolic &$D_4$\\  				
\hline
$t11532$ & $0$  & $-o9\_43446$ & $0$ & hyperbolic & $0$ \\ 
\hline
$t11900$ &  $0$ &$-o9\_40803$ & $0$ & hyperbolic & $0$\\  					
\hline

$o9\_22951$ & $\Z_2$  &$-o9\_22951$ & $\Z_2$ & $JSJ\big(SFS \,[D^2: (3,1) (3,-1)]\cup  m004\big)$ & at least $\Z_2$ \\  					
\hline
$o9\_39806$  & $\Z_2$ & $o9\_34949$ & $\Z_2$ &  hyperbolic&$\Z_2^2$ \\ 
\hline
$o9\_41058$ & $0$ &  $-o9\_41058$ & $0$ & hyperbolic& $\Z_2$\\  					
\hline
\end{tabular}
}
\end{table}

\section{Algorithmic search for 0-surgery diffeomorphisms}

In this section, we explain and implement various methods to algorithmically search for a Kirby calculus presentation of a $0$-surgery diffeomorphism of a given pair of friends. Using these methods we find explicit $0$-surgery diffeomorphisms for many friends from the census of Table~\ref{tab:doubles} and are in some cases able to answer if the friends are $4$-dimensional friends or not.

We also note here that there exists an algorithm that takes as input diagrams of a pair of friends $K$ and $K'$ and outputs a $0$-surgery diffeomorphism presented as a sequence of Kirby moves. Indeed, we can just enumerate all possible Kirby moves in the Kirby diagram of $K(0)$ given by a $0$-framed diagram of $K$. In finite time, we will reach the Kirby diagram of $K'(0)$ given by a $0$-framed diagram of $K'$. Of course, this algorithm is not practical but guaranteed to theoretically always work. Below we explain certain strategies that will not work in general but often return a verified result in reasonable time.

\subsection{Piccirillo friends}

Let $K$ be a knot with unknotting number $1$. Piccirillo \cite{Piccirillo_Conway_knot} describes a simple method for constructing a knot $K^*$, the \textit{Piccirillo friend}, whose trace is diffeomorphic to the trace of $K$. Given a diagram $D$ of $K$ that realizes the unknotting number of $K$ this process is algorithmic. We have implemented that algorithm~\cite{GitHub} and have applied that code to prove the following. 

\begin{thm}\label{thm:Piccirillo_duals}
At least one of the knots of the $9$ pairs of friends displayed in Table~\ref{tab:Piccirillo} has unknotting number one and the other knot is its Piccirillo friend. Thus these friends have diffeomorphic traces.
\end{thm}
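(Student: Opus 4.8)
The plan is to establish the claim row by row for the nine pairs in Table~\ref{tab:Piccirillo}, treating the statement as a verified computation rather than a structural argument. Fix such a pair of friends $(K,K')$. First I would decide which of the two knots to take as the knot of unknotting number one: using the unknotting data recorded in KnotInfo~\cite{KnotInfo}, supplemented by a direct search over diagrams when a knot is not tabulated there, I produce for one of them, say $K$, a planar diagram $D$ with a distinguished crossing $c$ whose change turns $D$ into a diagram of the unknot. This certifies $u(K)\le 1$, and since $K$ is non-trivial (it appears as a friend in Table~\ref{tab:doubles}), we get $u(K)=1$ with $D$ realizing it.

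Given the unknotting diagram $D$, I apply Piccirillo's construction~\cite{Piccirillo_Conway_knot}. The unknotting crossing is recorded by a $\pm1$-framed meridional unknot in a Kirby diagram of the trace $X(K)$, and a sequence of blow-downs and handle slides re-expresses this diagram as the trace of a new $0$-framed knot $K^{\ast}$; by construction $X(K^{\ast})$ is orientation-preservingly diffeomorphic to $X(K)$. This step is algorithmic once $c$ is fixed, but the resulting $K^{\ast}$ depends on the chosen unknotting crossing, so in practice one runs the implemented algorithm~\cite{GitHub} over the unknotting crossings of $D$ (and of other small unknotting diagrams of $K$) and collects the candidate knots it outputs.

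The decisive step is to recognise one of these candidates as the other friend $K'$, up to the chirality convention of Table~\ref{tab:doubles}. Since the algorithm offers no control over the crossing number of its output, the identification cannot be made by inspection. Instead I would pass $K^{\ast}$ and $K'$ to SnapPy~\cite{SnapPy} and use its \emph{verified} isometry test to certify that their complements are orientation-preservingly isometric, hence that $K^{\ast}$ and $K'$ are isotopic; for any pair in which a knot is non-hyperbolic I would recognise $K^{\ast}$ through its JSJ/Seifert-fibered decomposition together with the auxiliary invariants used in Section~\ref{sec:low}. Once $K^{\ast}\cong K'$ is verified, Piccirillo's construction gives $X(K)\cong X(K^{\ast})\cong X(K')$, so the pair are $4$-dimensional friends, as claimed.

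I expect the recognition step to be the main obstacle. The construction produces diagrams whose complexity is uncontrolled, so a robust, verified knot-identification procedure is indispensable; moreover, because Table~\ref{tab:doubles} lists only one chirality and Piccirillo's construction preserves only the \emph{oriented} diffeomorphism type of the trace, one must track orientations carefully to match $K^{\ast}$ with the correct mirror of $K'$. Any non-hyperbolic pair among the nine would further require the more delicate combinatorial recognition of Section~\ref{sec:low}, rather than the clean isometry check available in the hyperbolic case.
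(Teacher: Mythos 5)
Your proposal matches the paper's proof in all essentials: the paper likewise finds an unknotting crossing in a (minimal-crossing) diagram of one knot of each pair, runs an implementation of Piccirillo's construction from~\cite{Piccirillo_Conway_knot} to produce the dual knot $K^{*}$, and then uses SnapPy to recognise $K^{*}$ as the other knot of the pair. The only cosmetic differences are that the paper sources its diagrams from~\cite{Burton,Baker_Kegel_braid_positive} rather than KnotInfo and does not dwell on the dependence on the chosen crossing, so your write-up is, if anything, slightly more careful about the same computation.
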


\begin{table}[htbp] 
	\caption{The Piccirillo friends}
	\label{tab:Piccirillo}
\begin{tabular}{|ccc|}
\hline
$(K6a1,19nh\_78)$ & $(K13n469,-K13n469)$ & $(K15n153789,o9\_31828)$\\
$(K9n4,o9\_34801)$ & $(K13n2527,-K15n9379)$ & $(t07281,o9\_34949)$\\
$(K11n38,v3093)$ & $(K15n94464,-o9\_40081)$&$(o9\_22951,-o9\_22951)$\\
\hline
\end{tabular}
\end{table}

\begin{proof}
Of the knots in Table~\ref{tab:doubles} we have found that at least $15$ knots have unknotting number one. For that, we load (potential) minimal crossing diagrams of these knots from~\cite{Burton,Baker_Kegel_braid_positive} and check which of these diagrams have unknotting crossings. We found that exactly $15$ of these diagrams admit an unknotting crossing. The other diagrams are proven to have no unknotting crossing and we expect the knots to have unknotting number larger than one. 

Next, we implement the algorithm from~\cite{Piccirillo_Conway_knot} and construct the Piccirillo friend $K^*$. Then we use SnapPy to check if the Piccirillo friend is isotopic to the friend in Table~\ref{tab:doubles}. This identifies that the above-claimed $9$ friends are $4$-dimensional friends. Note that this also yields a simple (human checkable) certificate that these knots share the same $0$-surgeries. 

In the other cases, the Piccirillo friend was a different knot than the one appearing in our census.
\end{proof}

\subsection{Special RBG links}

Manolescu--Piccirillo~\cite{Manolescu_Piccirillo_0_surgery} describe a particularly simple family of RBG links, called \textit{special} RBG links, with the property that $B$ and $G$ both are meridians of $R$. We transform a diagram of a special RBG link $L$ into a Kirby diagram by putting an integer framing $r$ on $R$ and framing $0$ on both $G$ and $B$. Then we can perform a slam dunk to cancel $R$ and $B$. The image of $G$ under this cancellation is a knot $K_G$ in $S^3$. By symmetry, the same holds true with the roles of $G$ and $B$ reversed. By choosing the correct framing $r$ on $R$ we get two knots $K_B$ and $K_G$ that share the same $0$-surgery.

Now the family of RBG links $L_1{(a,b,c,d,e,f)}$ from~\cite{Manolescu_Piccirillo_0_surgery} shown in Figure~\ref{fig:special_RBG}(i) depends on six parameters $(a,b,c,d,e,f)$ and is chosen in such a way that the resulting knots $K_B$ and $K_G$ have relatively small crossing number. In Figure~\ref{fig:special_RBG}(ii) and (iii) we display two more similar RBG links $L_2{(a,b,c,d,e,f)}$ and $L_3{(a,b,c,d,e,f)}$. It is shown in~\cite{Manolescu_Piccirillo_0_surgery} that the parity of $r=a+b$ yields the parity of the associated $0$-surgery diffeomorphism (given by the slam dunks) and that if $r=0$ this $0$-surgery diffeomorphism extends to a trace diffeomorphism. Below we use this family to find explicit Kirby moves for several pairs of friends from our census and study if they are $4$-dimensional friends or not.

\begin{figure}[htbp] 
	\centering
	\def\svgwidth{0,99\columnwidth}
	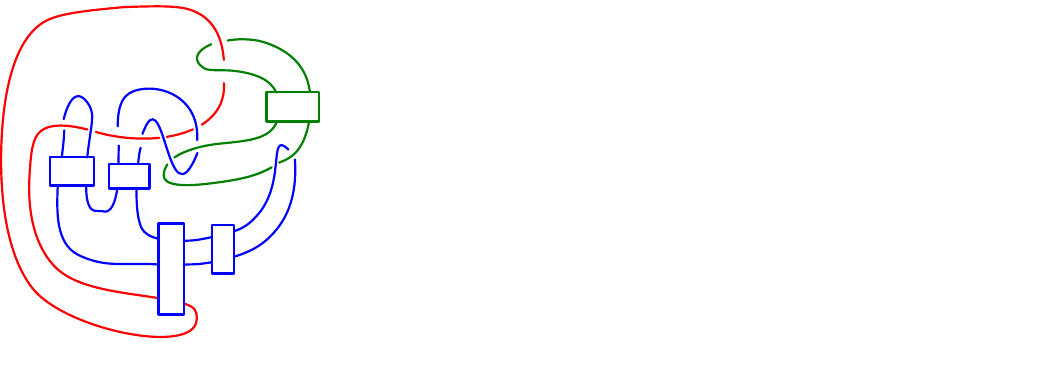
	\caption{Three special RBG links. $L_1$ is the RBG link from Figure 13 in~\cite{Manolescu_Piccirillo_0_surgery}. $L_2$ and $L_3$ are obtained from $L_1$ by crossing changes.}
	\label{fig:special_RBG}
\end{figure}

\begin{thm}\label{thm:specialRBG}
    The $16$ pairs of friends shown in Table~\ref{tab:MP_data} arise as knots $K_B$ and $K_G$ from the RBG link $L_i{(a,b,c,d,e,f)}$ for the printed values of ${(a,b,c,d,e,f)}$.
    The column 'traces' describes how the traces of these friends are related.
\end{thm}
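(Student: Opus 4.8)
The plan is to verify the table row by row, splitting the work into an identification step and a parity step. For the identification, I would take each tabulated tuple $(a,b,c,d,e,f)$ together with its link type $L_i$ and draw the corresponding special RBG link from Figure~\ref{fig:special_RBG}. Following the Manolescu--Piccirillo recipe, I put the framing $r=a+b$ on $R$ and framing $0$ on both $G$ and $B$, then slam dunk $R$ into $B$ to read off the knot $K_G$, and symmetrically slam dunk $R$ into $G$ to read off $K_B$. I would then feed the resulting $0$-framed diagrams into SnapPy and check that the two output knots are isotopic to the two friends named in the given row of Table~\ref{tab:MP_data} (matching complements by a verified isometry for the hyperbolic cases, and by the combinatorial Regina methods of Section~\ref{sec:low} in the non-hyperbolic cases). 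Since both knots come from the same RBG link, they automatically share the same $0$-surgery, so this step simultaneously reconfirms that each pair is a pair of friends.

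For the trace column, the key input is that the parity of the slam-dunk $0$-surgery diffeomorphism equals the parity of $r=a+b$. When $a+b$ is even I would invoke Boyer's theorem: the diffeomorphism is even, hence extends to a trace homeomorphism, and when moreover $r=0$ it extends to a trace diffeomorphism. These two sub-cases produce the entries $C^0$ and $C^\infty$ respectively, which can simply be read off from whether $a+b$ vanishes or is merely even.

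The delicate case is when $a+b$ is odd. Then the particular slam-dunk diffeomorphism is odd and, by Boyer's theorem, fails to extend; but to conclude that the pair is genuinely \emph{not} a $4$-dimensional friend I must rule out \emph{every} $0$-surgery diffeomorphism, not just this one. Here I would apply Proposition~\ref{prop:ex_sym}: choosing the hyperbolic member of the pair for which $0$ is not a symmetry-exceptional slope, as recorded in Table~\ref{tab:symmetry_groups}, forces all $0$-surgery diffeomorphisms to share the same (odd) parity, so none extends to a trace homeomorphism and the traces are not homeomorphic. This yields the ``not $C^0$'' entries.

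The main obstacle I expect is precisely this last step, since the naive parity computation alone only shows that one diffeomorphism is odd; the genuine non-homeomorphism conclusion hinges on confirming, for each relevant row, that $0$ is not a symmetry-exceptional slope of the chosen friend, which requires the verified symmetry-group computations underlying Table~\ref{tab:symmetry_groups}. A secondary difficulty is the SnapPy identification of the slam-dunk outputs with the census names when the $0$-surgery is non-hyperbolic, but those cases are handled by the same Pachner-graph search through Regina already used to build the census.
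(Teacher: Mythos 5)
Your identification step and your treatment of the even-parity rows match the paper's proof: the authors likewise build the links $L_i(a,b,c,d,e,f)$ in SnapPy (enumerating all parameters in $\{-10,\ldots,10\}$ and matching the Dehn-filled complements against the census, rather than verifying row by row, but this is the same computation), and they likewise read off $C^\infty$ when a representative with $a+b=0$ exists and $C^0$ when $a+b$ is even but nonzero, via the Manolescu--Piccirillo parity statement and Boyer's theorem.

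Where you go astray is the odd case. You assert that applying Proposition~\ref{prop:ex_sym} to the odd-parity rows ``yields the `not $C^0$' entries,'' but Table~\ref{tab:MP_data} contains no such entries: every row where only an odd diffeomorphism was found is left \emph{blank}. The reason, spelled out in the remark immediately after the theorem, is that for each of those six pairs ($K11n49/K15n103488$, $K12n121/{-t11441}$, $K12n200/t11748$, the two $K13n1021$ rows, and $K13n3158/o9\_41909$) the $0$-surgery is either exceptional (non-hyperbolic, e.g.\ for $K12n121$) or symmetry-exceptional for \emph{both} knots of the pair --- Table~\ref{tab:symmetry_groups} shows the symmetry group of the $0$-surgery strictly exceeds that of each knot --- so the hypothesis of Proposition~\ref{prop:ex_sym} fails on both sides and no conclusion about the traces can be drawn. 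Your method would not produce a wrong answer (it would simply return nothing for those rows), but you have mispredicted its output: the ``not $C^0$'' conclusions in the paper come from the flat annulus presentations of Theorem~\ref{thm:annulus} and Example~\ref{ex:MP_ex_not4-d}, applied to different pairs for which an asymmetric or non-symmetry-exceptional member exists, not from this theorem.
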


\begin{table}[htbp] 
	\caption{Friends obtained from special RBG links}
	\label{tab:MP_data}
 {\scriptsize
\begin{tabular}{|c|c|c|c|}
\hline
knot & knot &  $L_i(a,b,c,d,e,f)$ & traces  \\  
\hline
\hline
$K9n4$   & $o9\_34801$ &  
$L_2(0, 0, 1, 0, 1, 0)$; 
$L_1(-2, 1, -1, 1, 1, 0)$& $C^\infty$ \\
\hline

$K10n13$  & $o9\_34818$ & 
$L_1(2, -2, 0, 1, 1, 0)$ & $C^\infty$ \\
\hline

$K11n38$   & $v3093$ & $L_2(0, 0, 0, 0, 1, 0)$& $C^\infty$  \\
\hline
$K11n49$ &$K15n103488$ & 
$L_1(-2, 1, -1, 1, 0, 0)$&   \\
\hline
$K11n116$  & $-t12607$& 
$L_1(-1, 1, -1, 1, 0, 0)$;
$L_1(-2, 1, 0, 1, 0, 0)$ & $C^\infty$ \\
\hline	

$K12n121$  & $-t11441$ & 
$L_1(1, -2, 0, 1, 1, 0)$&  \\	
\hline
$K12n200$  & $t11748$ & 
$L_1(3, -2, -1, 1, 1, 0)$&  \\	
\hline

$K13n572$   & $-K15n89587$  &
$L_1(-1, 1, -3, 1, 0, 0 )$; 
$L_1(-2, 1, -1, 1, -1, 0)$ & $C^\infty$ \\
\hline
$K13n1021$   & $K15n101402$ &   
$L_1(-2, 1, -2, 1, -1, 0 )$ &  \\
\hline
$K13n1021$  & $-o9\_34908$ & $L_2(-2, 1, 0, 1, 1, -1)$  &  \\
\hline
$K13n2527$  &$-K15n9379$  & 
$L_1(-1, 1, -2, 1, 0, 0)$; $L_3(-2, 1, -1, 0, -1, 0)$ & $C^\infty$ \\ 
\hline
$K13n3158$  & $o9\_41909$ & 
$L_1(0, 1, -1, 1, 0, 0 )$ &  \\
\hline

$K14n3611$  & $-o9\_27542$& 
$L_1(2, -2, -1, 1, 1, 0 )$; 
$L_1(3, -2, -1, 1, 0, 0 )$; &$C^\infty$\\
\hline
$K14n5084$  & $-o9\_37547$ & $L_1(1, 1, 0, 0, 2, -1)$  & $C^0$ \\	
\hline	

$K15n19499$  &$K15n153789$&  
$L_1( -1, 1, -2, 1, -1, 0)$; $L_1(-5, 1, 0, 1, 1, 0)$ & $C^\infty$ \\
\hline
$K15n19499$  & $o9\_31828$& 
$L_1(-1, 1, -1, 1, 0, -1)$; 
$L_1(-2, 1, -1, 1, 1, -1)$ & $C^\infty$ \\
\hline

\end{tabular}
}
\end{table}

\begin{proof}
    We import a surgery presentation of $L_i{(a,b,c,d,e,f)}$ into SnapPy which we can Dehn fill to obtain ideal triangulations of the complements of the associated knots $K_B$ and $K_G$ for small values of the parameters $(a,b,c,d,e,f)$. Then we check if SnapPy recognizes these triangulations as complements of knots in the census or in the low-crossing knots. Concretely, we check each of the parameters $(a,b,c,d,e,f)$ in the range $\{-10,\ldots,10\}$. Most pairs of friends that arise from $L_i{(a,b,c,d,e,f)}$ arise in several different ways. From the RBG link, we get an explicit sequence of Kirby moves describing a $0$-surgery diffeomorphism. The parity of this diffeomorphism is the parity of $a+b$. In Table~\ref{tab:MP_data} we always list the simplest choice of parameters for each possible parity. If we found an even $0$-surgery diffeomorphism we always tried to list one that has $a+b=0$, since for these we know that the $0$-surgery diffeomorphism extends to a trace diffeomorphism.  
\end{proof}

\begin{rem}
    For the friends where we only found odd $0$-surgery diffeomorphisms, we can check from Table~\ref{tab:symmetry_groups} that the $0$-surgery is either exceptional or symmetry-exceptional and thus Proposition~\ref{prop:ex_sym} cannot be applied. A possible approach to study these cases would be to compute the symmetry group $G$ of the $0$-surgery. If $G$ is a finite group (for example if the $0$-surgery is hyperbolic) we consider all possible compositions of the $0$-surgery diffeomorphisms given by the special RBG links and see them as elements in $G$. Next, we follow the action of the meridians through these Kirby moves. From the action of the meridian under these compositions, we can distinguish the $0$-surgery diffeomorphisms and find a composition of $0$-surgery diffeomorphisms for every element in $G$. In particular, we can display also the exceptional symmetries like this. From the action on the meridians, we can also determine the parities of these $0$-surgery diffeomorphisms.
\end{rem}

\subsection{Flat annulus presentations}

A classical method to construct infinitely many mutually distinct knots that share the same $0$-surgery is via \textit{annulus twisting}~\cite{Osoinach_annulus, AJOT_annulus_twisting_2013,AJDLO_same_traces,AbeTagami_annulus_presentation}. For examples, we refer to the two links $L_1$ and $L_2$ shown in Figure~\ref{fig:flat_annulus}. Performing $(1/n)$- and $(-1/n)$-surgeries on the blue and red components yields a sequence of knots $K_n^m$. By the main result of~\cite{Osoinach_annulus} it follows that for every $m\in\Z$ the knots $K_n^m$ are all friends. Moreover, such an annulus presentation yields an explicit sequence of Kirby moves describing $0$-surgery diffeomorphisms $\varphi_n^m\colon K_0^m(0)\rightarrow K_n^m(0)$, see for example~\cite{Osoinach_annulus}. If the core curve of the annulus is unknotted and if the boundary curves of the annulus have vanishing linking number (as the red and blue curves in Figure~\ref{fig:flat_annulus}) it is straightforward to check that the parity of $\varphi_n^m$ is given by the parity of $n$ and that for any even $n$, the $0$-surgery diffeomorphism $\varphi_n^m\colon K_0^m(0)\rightarrow K_n^m(0)$ extends to a trace diffeomorphism, see for example~\cite{Manolescu_Piccirillo_0_surgery}. 

\begin{figure}[htbp] 
	\centering
	\def\svgwidth{0,99\columnwidth}
	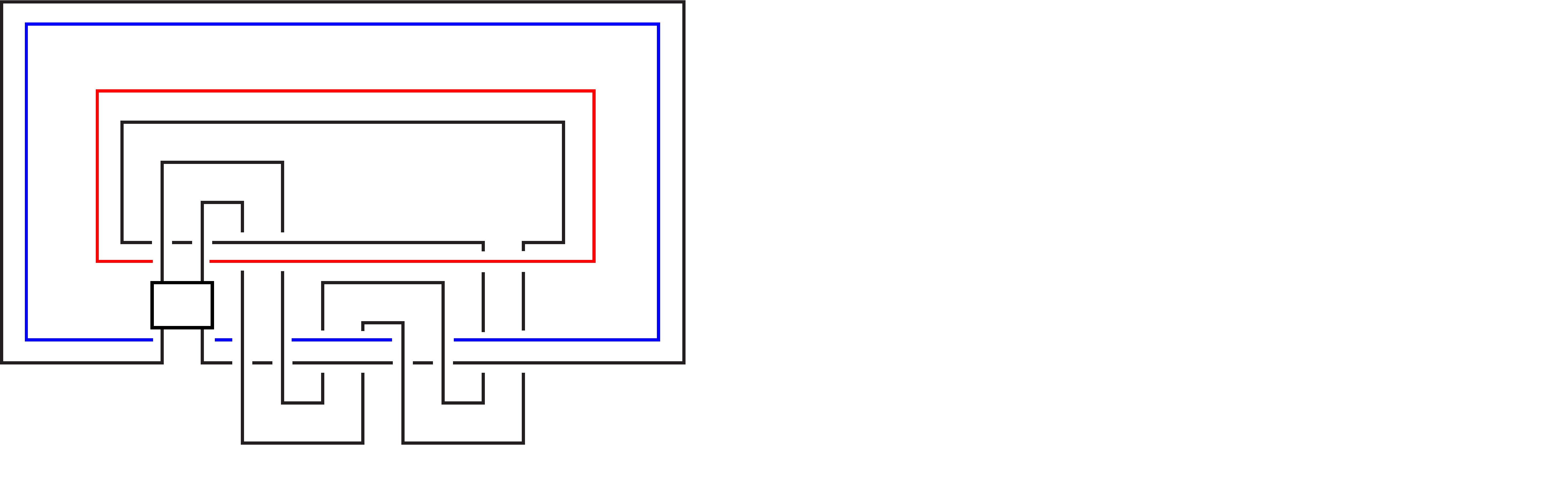
	\caption{Two flat annulus presentations $L_1$ and $L_2$.}
	\label{fig:flat_annulus}
\end{figure}

\begin{thm}\label{thm:annulus}
The pairs of friends shown in Table~\ref{tab:AT} arise as knots $K_n^m$ from the flat annulus presentations shown in Figure~\ref{fig:flat_annulus}.
The column ‘traces' describes how the traces of these friends are related.
\end{thm}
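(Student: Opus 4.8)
The plan is to follow the same computational strategy as in the proof of Theorem~\ref{thm:specialRBG}, now applied to the two flat annulus presentations $L_1$ and $L_2$ of Figure~\ref{fig:flat_annulus}. First I would import a surgery presentation of each link into SnapPy, with the blue and red components framed by $1/n$ and $-1/n$ respectively and the remaining integer parameter $m$ controlling the number of twists in the band. For small values of $m\in\Z$ and $n\in\Z$ I would Dehn fill the blue and red cusps to obtain an ideal triangulation of the complement of $K_n^m$, and then ask SnapPy to recognize this triangulation as the complement of a census knot or of a low-crossing knot appearing in our census. Since, by the annulus-twisting result recalled before the theorem, all the $K_n^m$ (for fixed $m$) are friends, each successful recognition produces a pair of friends explicitly realized by the annulus construction.

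Concretely, I would run over the parameters $m$ and $n$ in a bounded range, say $|m|,|n|\le 10$, and record every pair of knots from Table~\ref{tab:doubles} identified in this way, together with the values of $m$ and $n$ that realize it; this finite search is what produces the list in Table~\ref{tab:AT}. The routine part is certifying that the recognized triangulations really are the claimed complements: exactly as in Theorem~\ref{thm:classification}, this is handled by the verified isometry-checking and recognition routines of SnapPy for the hyperbolic cases and by Regina for the few non-hyperbolic ones.

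To fill in the column `traces', I would use the parity statement recalled just before the theorem: the Kirby-move description of the annulus twisting yields an explicit $0$-surgery diffeomorphism $\varphi_n^m\colon K_0^m(0)\to K_n^m(0)$ whose parity equals the parity of $n$, and for even $n$ this diffeomorphism extends to a trace diffeomorphism. Hence whenever a pair is realized with an even value of $n$ I can immediately record the relation as $C^\infty$. For a pair realized only with odd $n$, the annulus gives merely an \emph{odd} $0$-surgery diffeomorphism, so I would instead invoke Proposition~\ref{prop:ex_sym}: if one of the two knots is hyperbolic with $0$ not a symmetry-exceptional slope, which can be read off from Table~\ref{tab:symmetry_groups}, then \emph{all} $0$-surgery diffeomorphisms share this odd parity and the friends are provably not $4$-dimensional friends. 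Alternatively, a non-vanishing Arf invariant would force the traces to be homeomorphic by Theorem~\ref{thm:Arf}, independently of parity.

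The hardest part will be precisely these odd-$n$ cases: there the annulus construction alone exhibits only an odd diffeomorphism, and deciding the trace relation requires either ruling out any even $0$-surgery diffeomorphism via the symmetry-exceptional-slope analysis of Proposition~\ref{prop:ex_sym}, or else independently producing an even $0$-surgery diffeomorphism, for instance from a special RGB link as in Theorem~\ref{thm:specialRBG}. Matching each entry of Table~\ref{tab:AT} with the correct one of these arguments is the substantive content beyond the routine SnapPy search.
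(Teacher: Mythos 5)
Your proposal matches the paper's proof: a SnapPy search over small parameters $(m,n)$ recognizing the filled complements as census or low-crossing knots, followed by the parity argument for the annulus-twist diffeomorphism (even $\Rightarrow$ trace diffeomorphism, odd $\Rightarrow$ this particular map does not extend) and an appeal to Proposition~\ref{prop:ex_sym} to rule out \emph{all} even diffeomorphisms in the odd cases. The only slight imprecision is that the relevant parity is that of the \emph{difference} of the two $n$-values realizing a given pair, which agrees with the parity of $n$ only when one of the two knots occurs at $n=0$; this does not affect the substance of the argument.
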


\begin{proof}
    We import the surgery descriptions $L_1$ and $L_2$ into SnapPy and use these to create ideal triangulations of the complements of $K_n^m$ for small values of $m$ and $n$. (Concretely, we have checked $n,m\in\{-10,10\}$.) Whenever SnapPy recognizes a pair of these manifolds (with the same $m$ but different values of $n$) as manifolds from our census, we print it in Table~\ref{tab:AT}. By the above discussion, the parity of the $0$-surgery diffeomorphism given by the Kirby moves induced from the annulus presentation is given by the parity of the difference of the values of $n$. If that difference is even, the traces are diffeomorphic and if it is odd, it follows that this particular $0$-surgery diffeomorphism does not extend to a trace homeomorphism. In Examples~\ref{ex:MP_ex_not4-d} and~\ref{ex:second_ex} below, we use Proposition~\ref{prop:ex_sym} to deduce that for some of these friends, no $0$-surgery diffeomorphism extends to a trace homeomorphism.
\end{proof}

\begin{table}[htbp] 
	\caption{Flat annulus twisting}
	\label{tab:AT}
 {\scriptsize
\begin{tabular}{|c|c||c|c||c|}
\hline
knot &  $L_i(m,n)$ & knot &  $L_i(m,n)$ & traces \\  
\hline
\hline
$K10n10$ & $L_1(-3, 1)$; $L_1(0, 0)$ & $-t11532$ & $L_1(-3, 0)$; $L_1(0, 1)$ & not $C^0$ \\
\hline
$K10n10$ & $L_1(-3, 1)$; $L_1(0, 0)$ & $o9\_43446$ & $L_1(-3, -1)$; $L_1(0, 2)$ & $C^\infty$ \\
\hline
$K11n49$ & $L_2(-3, 1)$; $L_2(-1, 0)$ & $K15n103488$ & $L_2(-3, 0)$; $L_2(-1, 1)$&  \\
\hline
$K11n49$ & $L_2(-3, 1)$; $L_2(-1, 0)$ & $o9\_42735$ & $L_2(-3,-1)$; $L_2(-1, 2)$& $C^\infty$ \\
\hline
$K11n116$ & $L_2(-4, 1)$; $L_2(0, 0)$ & $t12607$ & $L_2(-4, 0)$; $L_2(0, 1)$&  \\
\hline
$K12n309$ & $L_1(-2, 1)$; $L_1(-1, 0)$ & $K14n14254$ & $L_1(-2, 0)$; $L_1(-1, 1)$ &  \\
\hline
$K12n318$ & $L_1(-4,1)$; $L_1(1, 0)$ & $o9\_39433$ & $L_1(-4, 0)$; $L_1(1, 1)$ & not $C^0$ \\
\hline
$K13n469$ & $L_2(-2, 0)$ & $-K13n469$ & $L_2(-2, 1)$&  \\
\hline
$K13n3158$ & $L_2(-5, 1)$; $L_2(1, 0)$ & $o9\_41909$ & $L_2(-5, 0)$; $L_2(1, 1)$&  \\
\hline
$K15n103488$ & $L_2(-3, 0)$; $L_2(-1, 1)$ & $o9\_42735$ & $L_2(-3, -1)$; $L_2(-1, 2)$&  \\
\hline
$t11532$ & $L_1(-3, 0)$; $L_1(0, 1)$ & $o9\_43446$ & $L_1(-3, -1)$; $L_1(0, 2)$ & not $C^0$ \\
\hline
\end{tabular}
}
\end{table}

\begin{ex} \label{ex:MP_ex_not4-d}
From Table~\ref{tab:symmetry_groups} we read off that the three pairs of friends 
$$(K10n10,t11532),\, (K12n318,o9\_39433),\, \textrm{ and } \, (t11532,-o9\_43446)$$ are all asymmetric and have asymmetric $0$-surgery. Thus Proposition~\ref{prop:ex_sym} implies that all $0$-surgery diffeomorphisms have the same parity (since the $0$-surgeries are asymmetric, all $0$-surgery diffeomorphisms are even isotopic). From Table~\ref{tab:AT} we read off that these three pairs of friends each admit a $0$-surgery diffeomorphism with odd parity and thus their traces cannot be homeomorphic.

We consider the pair of friends $K12n309$ and $-K14n14254$. Both knots are strongly invertible and have symmetry group $\Z_2$ generated by the strong inversion. In Table~\ref{tab:symmetry_groups} we computed that the symmetry group of their $0$-surgery is $\Z_2$, generated by the extension of the strong inversion. In particular, $0$ is not symmetry-exceptional for both knots. By Proposition~\ref{prop:ex_sym} all $0$-surgery diffeomorphisms have the same pairity. Since the $0$-surgery diffeomorphism from Table~\ref{tab:AT} is odd, Boyer's theorem (Theorem~\ref{thm:Boyer}) implies that no $0$-surgery diffeomorphism of this pair extends to a homeomorphism of the traces. 
\end{ex}

\begin{ex}\label{ex:second_ex}
We consider the pair of friends $K13n469$ and $-K13n469$. The diffeomorphism of the $0$-surgery obtained from the annulus twist in Table~\ref{tab:AT} does not extend to a diffeomorphism of the traces (since it has odd parity). However, the diffeomorphism of the $0$-surgery from Remark~\ref{rem:mirrors_examples} extends to a diffeomorphism of the traces. Thus Proposition~\ref{prop:ex_sym} implies that $0$ is an exceptional or symmetry-exceptional slope of $K13n469$. In fact, we can check that the $0$-surgery of $K13n469$ is not hyperbolic, see Table~\ref{tab:symmetry_groups}. 
\end{ex}

Now the part about the traces in Theorem~\ref{thm:census} and thus also Theorem~\ref{thm:traces} follows by combining the results from Table~\ref{tab:Arf} and Theorems~\ref{thm:Piccirillo_duals}, \ref{thm:specialRBG} and~\ref{thm:annulus}.

\let\MRhref\undefined
\bibliographystyle{hamsalpha}
\bibliography{complex.bib}

\end{document}